\setlist[enumerate,1]{label={\upshape(\roman*)}}
\newcommand{\Rmnum}[1]
{\expandafter\@slowromancap\romannumeral #1@}
\newtheorem{thm}{Theorem}[section]
\newtheorem{prop}[thm]{Proposition}
\newtheorem{lemma}[thm]{Lemma}
\newtheorem{example}[thm]{Example}
\newtheorem{defin}[thm]{Definition}
\theoremstyle{definition}
\newtheorem{remark}[thm]{Remark}
\title[Perfect codes in quartic Cayley graphs of generalized dihedral groups]{Perfect codes in quartic Cayley graphs on generalized dihedral groups}
\date{}
\thanks{*Corresponding author}
\author[DONG]{Chengcheng Dong}
\address{School of Science\\China University of Geosciences\\Beijing 100083\\China}
\email{dongcc@email.cugb.edu.cn}
\author[YANG]{Yuefeng Yang*}
\address{School of Science\\China University of Geosciences\\Beijing 100083\\China}
\email{yangyf@cugb.edu.cn}
\author[DONG]{Changchang Dong}
\address{School of Science\\China University of Geosciences\\Beijing 100083\\China}
\email{2024020015@cugb.edu.cn}
\begin{document}
	
\begin{abstract}
For a graph $\Gamma=(V\Gamma,E\Gamma)$, a subset $D$ of $V\Gamma$ is a perfect code in $\Gamma$ if every vertex of $\Gamma$ is dominated by exactly one vertex in $D$. In this paper, we classify all connected quartic Cayley graphs on generalized dihedral groups admitting a perfect code, and determine  all perfect codes in such graphs.
\end{abstract}
	
\keywords{Perfect code; Cayley graph; generalized dihedral group}
	
\subjclass[]{}
	
\maketitle
\section{Introduction}
A {\em graph} $\Gamma$ is a pair $(V\Gamma,E\Gamma)$ of vertex set $V\Gamma$ and edge set $E\Gamma$, where $E\Gamma$ is a subset of the set of 2-element subset of $V\Gamma$. In this paper, every group considered is finite, and every graph considered is finite, simple and undirected.

Let $\Gamma=(V\Gamma,E\Gamma)$ be a graph. Two distinct vertices $x,y\in V\Gamma$ are called {\em adjacent} if the set $\{x,y\}\in E\Gamma$. For a vertex $x\in V\Gamma$, let $N(x)$ denote the {\em neighborhood} of $x$ in $\Gamma$, which consists of all the vertices that are adjacent to $x$. A vertex $u$ {\em dominates} a vertex $v$ if either $u=v$ or $\{u,v\}\in E\Gamma$. A set $D\subseteq V\Gamma$ is called a
{\em dominating set} in $\Gamma$ if every vertex of $V\Gamma$ is dominated by a vertex in $D$. If $D$ is a dominating set in $\Gamma$ such
that every vertex of $\Gamma$ is dominated by exactly one vertex in $D$, then $D$ is called a {\em perfect code} in $\Gamma$. In some references, perfect codes are called {\em efficient dominating sets} \cite{DWB88,IJD03,DYP14,DYP17} or {\em independent perfect dominating sets} \cite{JL01,XM21}.

From the beginning of coding theory in the late 1940s, perfect codes have been important objects of study in information theory; see the surveys \cite{HO08,LJH75} for a large number of results on perfect codes. Biggs \cite{NB73} and Kratochv\'{i}l \cite{JK86} first investigated the problem of determining the existence of perfect codes in some families of graphs. Therefore, the notion of perfect codes can be generalized to graphs in a natural way. Also beginning with \cite{NB73}, perfect codes in distance transitive graphs have received considerable attention in the literature. See, for example, \cite{BE77,HP75,SDH80,KDS20}. It may safely be said that the most important class of distance transitive graphs, for coding
theory, is the class of Hamming graphs. The {\em Hamming graph} $H(n,q)$ is the Cartesian product
of $n$ copies of the complete graph of order $q$.

Since $H(n,q)$ is the Cayley graph of the additive group $\mathbb{Z}_q^n$ with respect to the set of all elements of $\mathbb{Z}_q^n$ with exactly one nonzero coordinate, perfect codes in Cayley graphs can be seen as another generalization of perfect codes in the classical setting. In general, for a group $G$ with the identity element $e$ and an {\em inverse-closed} subset $S$ of $G\setminus\{e\}$ (that is, $S^{-1}:=\{x^{-1}:x\in S\}=S$), the {\em Cayley graph} $\mathrm{Cay}(G,S)$ of $G$ with the {\em connection set} $S$ is defined to be the graph with the vertex set $G$ such that $x,y\in G$ are adjacent whenever $yx^{-1}\in S$. Clearly, $\mathrm{Cay}(G,S)$ is a $|S|$-regular graph, and $\mathrm{Cay}(G,S)$ is connected if and only if $G$ is generated by $S$, which is denoted by $G=\langle S\rangle$. Perfect codes in Cayley graphs have attracted considerable attention in recent years, see \cite{FR17,HH18} for short surveys and background information. Also in \cite{HH18}, Huang, Xia and Zhou introduced the concept of a subgroup perfect code of a group $G$, that is a subgroup of $G$ and a perfect code in some Cayley graph of $G$. Further results on subgroup perfect codes in Cayley graphs, see for examples \cite{CJ20,KY23, MX20,ZJ21,ZJ22}.
	
Perfect codes in Cayley graphs on abelian groups have been studied by several researchers in recent years. Obradovi\'{c}, Peters and Ru\v{z}i\'{c} \cite{NO07} gave a complete characterization of circulant graphs with two chord lengths that admit a perfect code, and classified cubic and quartic circulants (that is, Cayley graphs on cyclic groups) admitting a perfect code. Deng \cite{DYP14} gave a necessary and sufficient condition for a circulant graph to admit a perfect code with size a prime number, and characterized all perfect codes if exist. In \cite{FR17}, Feng, Huang and Zhou obtained a necessary and sufficient condition for a circulant graph of degree $p-1$ to admit a perfect code, where $p$ is an odd prime. And also in \cite{FR17}, authors gave a necessary and sufficient condition for a circulant graph of order $n$ and degree $p^l-1$ to have a perfect code, where $p$ is a prime and $p^l$ the largest power of $p$ dividing $n$. In \cite{CC19}, Caliskan, Miklavi$\mathrm{\check{c}}$ and $\mathrm{\ddot{O}}$zkan classified the cubic and quartic Cayley graphs on abelian groups admitting a perfect code. Kwon, Lee and Sohn \cite{KYS22} gave necessary and sufficient conditions for the existence of perfect codes in quintic circulants and classified these perfect codes. Yang, Ma and Zeng \cite{YYF} classified all connected quintic Cayley graphs on abelian groups that admit a perfect code, and determined completely all perfect codes in such graphs. Yu, Yang, Fan and Ma \cite{YSL24} classified strongly connected 2-valent Cayley digraphs on abelian groups admitting a perfect code, and determined completely all perfect codes of such digraphs. For more about perfect codes in Cayley graphs, see for examples \cite{IJD03,MM11,ZS19}.

One of the most natural next families of groups to consider are the generalized dihedral groups. Let $G$ be a group. For a subgroup $H\leqslant G$, we denote the {\em index} of $H$ in $G$ by $[G:H]$. Recall that $[G:H]=|G|/|H|$. For $a\in G$, let $o(a)$ denote the {\em order} of $a$, that is, the smallest positive integer $m$ such that $a^m=e$, where $e$ is the identity element. In particular, an element $a$ is called an {\em involution} if $o(a)=2$. A group $G$ is a {\em generalized dihedral group} generated by $A$ and $t$ if $A$ is an index $2$ abelian subgroup of $G$ and $t$ is an involution with $t\in G\setminus A$ such that $tat=a^{-1}$ holds for all $a\in A$. Caliskan, Miklavi$\mathrm{\check{c}}$, $\mathrm{\ddot{O}}$zkan and $\mathrm{\check{S}}$parl \cite{CC22} classified the connected cubic Cayley graphs on generalized dihedral groups which admit a perfect code. In this paper, we study connected quartic Cayley graphs on generalized dihedral groups that admit a perfect code, and characterize all perfect codes in such graphs.

In the remainder of this paper, $A$ always denotes a finite abelian group with an involution $t$, and $G$ denotes the generalized dihedral group generated by $A$ and $t$ where $tat=a^{-1}$ for all $a\in A$. For an integer $k$, let $\sigma(k)=(1-(-1)^k)/2$.

The following theorem classifies connected quartic Cayley graphs on generalized dihedral groups admitting a perfect code.

\begin{thm}\label{1.1} Let $\Gamma=\mathrm {Cay}(G,S)$ be a connected quartic Cayley graph on a generalized dihedral group. Without loss of generality assume $t\in S$. Then $\Gamma$ admits a perfect code if and only if one of the following holds:
\begin{enumerate}
\item\label{1.1-1} $S=\{s_1,s_1^{-1},t,ts_0\}$, $5\mid n$ and $s_1^m=s_0^{5u/2\pm m}$ for some $u\in\{0,2,4,\ldots,2(n/5-1)\}$;
\item\label{1.1-2} $S=\{t,ts_0,ts_1,ts_2\}$, $5\mid n$, $s_1^{m}=s_0^{5\alpha_1-am}$ and $s_2^{l}=s_0^{5\alpha_2-bl+aj}s_1^{j}$ for some $\alpha_1,\alpha_2\in\{0,1,\ldots,n/5-1\}$ and $j\in\{0,1,\ldots,m-1\}$, where $\{a,b\}=\{2+\sigma(v),v-\sigma(v)-1\}$ for $v\in\{2,3,4\}$.
\end{enumerate}
Here,  $s_0,s_1,s_2\in A$, $o(s_0)=n$, $m$ is the minimum positive integer such that $s_1^m\in\langle s_0\rangle$ and $l$ is the minimum positive integer such that $s_2^l\in \langle s_0,s_1\rangle$.
\end{thm}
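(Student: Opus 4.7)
My plan is to run a case analysis on the structure of the connection set $S$ and, for each case, translate the perfect code condition into a tiling equation in the abelian subgroup $A$. Since $tat = a^{-1}$ forces $(ta)^2 = e$ for all $a \in A$, every element of $G \setminus A$ is an involution, so $S$ splits as $(S \cap A) \sqcup (S \cap tA)$ where $S \cap A$ is a disjoint union of inverse pairs $\{s, s^{-1}\}$ with $o(s) > 2$ together with involutions of $A$, while $S \cap tA$ consists entirely of involutions and contains $t$. Enumerating the possibilities for $|S \cap A| \in \{0, 1, 2, 3\}$ against the involution structure yields a short list of candidate shapes; the two appearing in the theorem correspond to $|S \cap A| = 2$ with $S \cap A = \{s_1, s_1^{-1}\}$, $o(s_1) > 2$ (case (i)) and to $|S \cap A| = 0$ (case (ii)). Each remaining shape contains an involution $s \in A \cap S$ distinct from $t$; since such an $s$ is central in $G$, a short counting argument in the quotient $G/\langle s\rangle$ will obstruct any tiling by a $5$-element set, except in degenerate configurations that reparametrize to a subcase of (i) or (ii).

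The perfect code condition amounts to $\bigsqcup_{d \in D}(S \cup \{e\})d = G$, so $|N[e]| = 5$ forces $5 \mid |G| = 2|A|$; combined with $\langle S\rangle = G$ this yields $5 \mid n$ in both cases. Writing $D_A = D \cap A$ and $D_t = t^{-1}(D \cap tA) \subseteq A$, the partition decomposes into two tiling identities in $A$, one coming from the $A$-half of $G$ and one from the $tA$-half. In case (i) a direct computation gives
\[
\{e, s_1, s_1^{-1}\}D_A \sqcup \{e, s_0^{-1}\}D_t = A, \qquad \{e, s_0\}D_A \sqcup \{e, s_1, s_1^{-1}\}D_t = A.
\]
Projecting to $A/\langle s_0\rangle$ and invoking the minimality of $m$ with $s_1^m \in \langle s_0\rangle$ reduces this to a tiling of a cyclic group of order $n/5$ by a pattern of shifts involving $\pm m$; the relation $s_1^m = s_0^{5u/2 \pm m}$, with $u$ ranging over the stated even residues, emerges as the exact compatibility condition under which the tiling lifts to a perfect code of $\Gamma$.

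The main obstacle is case (ii), where $\Gamma$ is bipartite with parts $A$ and $tA$ and each closed neighborhood splits $1$-to-$4$. Here the tiling equations become
\[
D_A \sqcup \{e, s_0^{-1}, s_1^{-1}, s_2^{-1}\}D_t = A, \qquad D_t \sqcup \{e, s_0, s_1, s_2\}D_A = A.
\]
Projecting first to $A/\langle s_0\rangle$, then to $A/\langle s_0, s_1\rangle$ via the minimal $m$ and $l$, I would show that the placement of the tile $\{e, s_0, s_1, s_2\}$ in these quotients falls into exactly three combinatorial regimes according to how many of $s_1, s_2$ share the same $\langle s_0\rangle$-coset relative to the $5$-cycle structure. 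These regimes are indexed by $v \in \{2, 3, 4\}$ and, after tracking the parity correction $\sigma(v)$, give rise to the asymmetric pairs $\{a, b\} = \{2 + \sigma(v), v - \sigma(v) - 1\}$ and the congruences $s_1^m = s_0^{5\alpha_1 - am}$ and $s_2^l = s_0^{5\alpha_2 - bl + aj}s_1^j$. Sufficiency in each regime is verified by writing $D$ explicitly as a union of $s_0^{5i}$-translates of a fixed tile; necessity is an exhaustion over the tile positions in the two quotients, showing that no other regime partitions $A$. Combining these with the elimination of the residual shapes from the first paragraph completes the proof.
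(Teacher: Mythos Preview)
Your tiling reformulation is sound and the bipartite decomposition in case~(ii) is written correctly, but the proposal leaves the substantive work undone in exactly the places where the paper invests real effort, and one of your claims is false.

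First, the elimination of the ``residual shapes'' containing an involution $s\in A\cap S$ is not a short counting argument in $G/\langle s\rangle$. Passing to that quotient collapses $s$ onto $e$, but a perfect code $D$ still injects into the quotient (since $d$ and $sd$ are adjacent), and the image multisets of the closed neighborhoods cover each coset with the correct total multiplicity, so no immediate contradiction appears. The paper treats $|S\cap tA|=1$ and $|S\cap tA|=2$ with two involutions by exhibiting an explicit isomorphism to a Cayley graph on an \emph{abelian} group and then invoking the quartic classification of Caliskan--Miklavi\v{c}--\"Ozkan; the case $|S\cap tA|=3$ is a page of neighborhood-chasing (Lemma~\ref{2.2}) with no visible shortcut. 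Your parenthetical that degenerate configurations ``reparametrize to a subcase of~(i) or~(ii)'' is simply wrong: these shapes admit no perfect code at all.

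Second, the projection steps are muddled. In case~(i), $A/\langle s_0\rangle$ has order $|A|/n$, not $n/5$, and need not be cyclic; the relation $s_1^m=s_0^{5u/2\pm m}$ is a condition \emph{inside} $\langle s_0\rangle$, not in the quotient by it. The paper instead shows (Lemma~\ref{2.4}) that $\Gamma$ is isomorphic to a quartic Cayley graph on an abelian group of order $2|A|$ and reads the answer off the known abelian result. In case~(ii), the heart of necessity is not a quotient argument but a layer-by-layer rigidity: one first proves (Lemmas~\ref{u=5} and~\ref{v=234}) that within each $s_0$-orbit the elements of $D\cap At$ are spaced exactly $5$ apart and the $A$- and $At$-parts are offset by a fixed $v\in\{2,3,4\}$, and then (Lemma~\ref{jk}) that this $v$ and the shift pair $\{a,b\}$ propagate uniformly to adjacent layers, forcing the congruences on $s_1^m$ and $s_2^l$. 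Your outline names the conclusion (``three regimes indexed by $v$'') but supplies no mechanism that produces $v$ or shows it is constant across layers; without that, ``exhaustion over tile positions in the two quotients'' is a restatement of the problem rather than a proof.
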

%\begin{thm}\label{1.1}

The second main result determines all perfect codes in quartic Cayley graphs on generalized dihedral groups.
	
\begin{thm}\label{1.2}
With the notations in Theorem \ref{1.1}, suppose that $\Gamma$ admits a perfect code. Then one of the following holds:
\begin{enumerate}
\item\label{1.2-1} If Theorem \ref{1.1} \ref{1.1-1} holds, then all perfect codes containing $t$ are exactly
\begin{align}
\bigcup_{k=0}^{m-1}\bigcup_{u=0}^{2(n/5-1)}\{s_0^{5u/2+\epsilon k}s_1^{k}t,s_0^{3+5u/2+\epsilon k}s_1^{k}\}~\text{for}~\epsilon\in\{\pm1\};\nonumber
\end{align}
\item\label{1.2-2} If Theorem \ref{1.1} \ref{1.1-2} holds, then all perfect codes containing $t$ are exactly
$$\bigcup_{\alpha'=0}^{n/5-1}\bigcup_{j'=0}^{m-1}\bigcup_{k'=0}^{l-1}\left(\{s_0^{5\alpha'+aj'+bk'}s_1^{j'}s_2^{k'}t\}\cup\{s_0^{5\alpha'+aj'+bk'+v}s_1^{j'}s_2^{k'}\}\right).$$
\end{enumerate}
\end{thm}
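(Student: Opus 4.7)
The plan is to reduce the problem to enumerating perfect codes containing $t$, and then, in each of the two cases of Theorem~\ref{1.1}, extract from the structural analysis underlying that theorem the explicit parametric form of every such code. Since $\mathrm{Cay}(G,S)$ is vertex-transitive via right multiplication, $Dg$ is a perfect code whenever $D$ is; choosing $g=d^{-1}t$ for any $d\in D$ shows we may assume $t\in D$, and conversely every right translate of a $t$-containing perfect code is again a perfect code. Writing any such code as $D=X\cup Yt$ with $X,Y\subseteq A$ and using the relation $ta=a^{-1}t$, I translate the exactly-one-dominator condition at each vertex into combinatorial constraints on $X$ and $Y$.

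For case \ref{1.1-1}, I would analyze how $D$ intersects the cosets of the normal subgroup $\langle s_0\rangle$ of $G$, namely $s_1^k\langle s_0\rangle$ and $s_1^k\langle s_0\rangle t$ for $k=0,1,\ldots,m-1$. The edges coming from $s_1^{\pm 1}$ shift the $s_1$-coset index by $\pm 1$, while those coming from $t$ and $ts_0$ bijectively pair $s_1^k\langle s_0\rangle$ with $s_1^{-k}\langle s_0\rangle t$, forming a $2n$-cycle on that pair. The exactly-one-hit condition forces $D$ to meet each such coset in an arithmetic progression of common difference $5$ in the exponent of $s_0$, of length $n/5$. The assumption $t\in D$ pins the starting exponent in $\langle s_0\rangle t$ to $0$; the $s_1^{\pm 1}$-edges then propagate a constant shift $\epsilon\in\{\pm 1\}$ of the starting exponent as $k$ varies, and following $t$-edges across paired cosets forces the starting exponent of the $A$-coset progressions to be offset by $3$ from that of the $At$-coset progressions. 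Finally, compatibility at $k=m$, where $s_1^m\in\langle s_0\rangle$, becomes the relation $s_1^m=s_0^{5u/2\pm m}$ of Theorem~\ref{1.1}\ref{1.1-1}, with the sign determining $\epsilon$; this yields exactly the family in Theorem~\ref{1.2}\ref{1.2-1}.

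For case \ref{1.1-2}, the graph $\Gamma$ is bipartite between $A$ and $At$ since every element of $S$ lies in $At$. The exactly-one-hit condition becomes: for every $a\in A$, exactly one of $a^{-1},\,s_0^{-1}a^{-1},\,s_1^{-1}a^{-1},\,s_2^{-1}a^{-1}$ lies in $Y$, and dually for elements of $At$ in terms of $X$. I plan to pair each $at\in D\cap At$ with the unique element of $D\cap A$ lying at $s_0$-distance $v\in\{2,3,4\}$, and then, reading off the hypotheses $s_1^m=s_0^{5\alpha_1-am}$ and $s_2^l=s_0^{5\alpha_2-bl+aj}s_1^{j}$ of Theorem~\ref{1.1}\ref{1.1-2}, to show that $Y$ is forced to be $\{s_0^{5\alpha'+aj'+bk'}s_1^{j'}s_2^{k'}:0\le\alpha'<n/5,\,0\le j'<m,\,0\le k'<l\}$ and $X=s_0^v Y$. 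The three unordered pairs $\{a,b\}\subseteq\{1,2,3\}$ correspond bijectively to the three choices $v\in\{2,3,4\}$ via $v=a+b-1$, which accounts for the three lines of the hypothesis.

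The main obstacle, I expect, will be establishing rigidity of the parametrization: showing that the local five-ball constraints force the stated shifts, offsets, and sign choices globally, rather than leaving residual freedom coset by coset. This will be handled by careful bookkeeping across cosets using the defining relations of Theorem~\ref{1.1}. Once the form of $D$ is pinned down, verification that each member of the claimed family is indeed a perfect code reduces to the cardinality check $|D|=|G|/5$ (namely $2(n/5)m=2nm/5$ in case \ref{1.1-1} and $2(n/5)ml=2nml/5$ in case \ref{1.1-2}) together with direct substitution into the exactly-one-hit condition using the hypothesized congruences.
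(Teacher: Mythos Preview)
Your plan is broadly sound, and for case~\ref{1.2-2} it parallels the paper's layer-by-layer strategy, but there are two points worth flagging.

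\textbf{Case~\ref{1.2-1}.} The paper takes a genuinely different and shorter route: Lemma~\ref{2.4} exhibits an explicit isomorphism $\Gamma\cong\mathrm{Cay}(A',\{\pm\alpha,\pm\beta\})$ with $A'$ abelian, after which the classification of perfect codes is imported directly from \cite[Lemmas~4.7, 4.10 and Theorem~4.11]{CC19}. Your direct coset-by-coset analysis can be made to work, but you would essentially be reproving those results of \cite{CC19} inside the dihedral group. Note also that your description of the $t,ts_0$-edges as pairing $s_1^k\langle s_0\rangle$ with $s_1^{-k}\langle s_0\rangle t$ depends on the left/right convention for Cayley edges; under the convention the paper actually uses in its proofs, the pairing is $s_1^k\langle s_0\rangle\leftrightarrow s_1^k\langle s_0\rangle t$, and the $2n$-cycle lives on a single layer, which is what makes the abelian reduction clean.

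\textbf{Case~\ref{1.2-2}.} Your outline matches the paper's architecture (Lemmas~\ref{ijk}--\ref{234} plus Proposition~\ref{v-2/3/4}), but the sentence ``pair each $at\in D\cap At$ with the unique element of $D\cap A$ lying at $s_0$-distance $v\in\{2,3,4\}$'' presupposes exactly the hard part. That such a $v$ exists, is unique, and is \emph{the same} for every element of $D\cap At$ (so that the shift propagates coherently across all layers $L_{j',k'}$) is the content of Lemmas~\ref{u=5}, \ref{v=234} and \ref{uv}, each of which requires substantial neighbourhood-chasing with many case distinctions; Lemma~\ref{jk} then pins down the offsets $a,b$ between consecutive layers, again by exhaustive case analysis for each $v\in\{2,3,4\}$. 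Your phrase ``careful bookkeeping across cosets using the defining relations'' is accurate as a description of what must be done, but you should not expect the relations of Theorem~\ref{1.1}\ref{1.1-2} alone to drive the argument: those relations are \emph{outputs} of the rigidity analysis (they emerge from the wrap-around conditions $L_{m,0}=L_{0,0}$ and $L_{0,l}=L_{j,0}$ in Lemma~\ref{234}), not inputs. The verification direction you sketch at the end is correct and matches Proposition~\ref{v-2/3/4}.
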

%\begin{thm}\label{1.2}

In the remainder of this paper, to prove Theorems \ref{1.1} and \ref{1.2}, we always assume that $\Gamma=\mathrm {Cay}(G,S)$ is connected for some inverse closed subset $S\subseteq G$ with $|S|=4$. It follows that $\langle S\rangle =G$, and so $S$ contains at least one element from $tA$.

The paper is organized as follows. In Sections 2 and 3, under the assumption $|S\cap tA|\neq4$ and $|S\cap tA|=4$ respectively, we determine all graphs $\Gamma$ that admit a perfect code and determine all perfect codes in such graphs. In Section 4, we give the proofs of Theorems \ref{1.1} and \ref{1.2}.

\section{$|S\cap tA|\neq4$}

In this section, we consider the case $|S\cap tA|\neq4$. Since $\Gamma$ is connected, we have $|S\cap tA|\in\{1,2,3\}$. By \cite[Proposition 2.1]{VDT91}, one gets $5\mid |A|$.

\begin{lemma}\label{2.1}
If $|S\cap tA|=1$, then $\Gamma$ does not admit a perfect code.
\end{lemma}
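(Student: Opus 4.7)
My plan is to attack Lemma~\ref{2.1} by a Fourier analysis on a cyclic quotient, after first reducing $S$ and $A$ to a very restricted form.

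Under the hypotheses $|S\cap tA|=1$ and (without loss of generality) $t\in S$, the remaining three elements of $S$ lie in $A$ and form an inverse-closed subset of the abelian group $A$. Such a subset is either three involutions or of the form $\{s,s^{-1},r\}$ with $o(s)\geq 3$ and $r$ an involution. The first possibility would force $A=\langle S\cap A\rangle$ to be an elementary abelian $2$-group, contradicting $5\mid |A|$. So I may assume $S=\{t,s,s^{-1},r\}$ and $A=\langle s,r\rangle$. Setting $n=o(s)$, there are two subcases: (a) $r\in\langle s\rangle$, forcing $r=s^{n/2}$, $n$ even and $A=\langle s\rangle$ cyclic; and (b) $r\notin\langle s\rangle$, giving $A=\langle s\rangle\oplus\langle r\rangle$.

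The main tool is the projection $\psi\colon G\to\mathbb{Z}_n$ sending $s^{i}r^{\varepsilon}\mapsto i$ and $ts^{i}r^{\varepsilon}\mapsto i$. Inspecting the four edges at any $v\in G$ with $\psi(v)=c$ shows that the multiset of $\psi$-values on $N[v]$ is $\{c-1,c,c,c+1,c+n/2\}$ in case (a) and $\{c-1,c,c,c,c+1\}$ in case (b). Suppose for contradiction that $D$ is a perfect code, and let $D_c=|D\cap\psi^{-1}(c)|$. Double-counting the pairs $(d,v)$ with $d\in D$ and $v\in N[d]\cap\psi^{-1}(c)$ yields the $n$ circulant equations
\[
2D_c+D_{c-1}+D_{c+1}+D_{c-n/2}=2\quad\text{(case (a))},\qquad 3D_c+D_{c-1}+D_{c+1}=4\quad\text{(case (b))},
\]
one for each $c\in\mathbb{Z}_n$.

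Now apply the discrete Fourier transform on $\mathbb{Z}_n$: writing $\omega=e^{2\pi i/n}$ and $\hat D(k)=\sum_{c}D_c\omega^{kc}$, the symbol of the circulant operator becomes $2+2\cos(2\pi k/n)+(-1)^k$ in case (a) and $3+2\cos(2\pi k/n)$ in case (b). For $k\neq 0$ this symbol is strictly positive: in case (b) it is at least $1$, and in case (a) it is at least $1$ for $k$ even, while for $k$ odd a vanishing would require $\cos(2\pi k/n)=-1/2$, i.e.\ $k\in\{n/3,2n/3\}$; but $n$ even together with $3\mid n$ forces $6\mid n$, whence both $n/3$ and $2n/3$ are even, contradicting $k$ odd. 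Hence $\hat D(k)=0$ for every $k\neq 0$, so $D_c$ is constant in $c$; the $k=0$ equation then forces $D_c=2/5$ in case (a) and $D_c=4/5$ in case (b). Neither value is a nonnegative integer, the desired contradiction.

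The main obstacle I anticipate is the positivity check for the Fourier symbol in case (a), which really uses the parity of $n$ coming from $r\in\langle s\rangle$ (and hence $r=s^{n/2}$) to rule out the potential zeros at $k=n/3$ and $k=2n/3$. Everything else is essentially bookkeeping: classify $S$, split $A$ into the two subcases, write down the cyclic projection $\psi$, tabulate the $\psi$-values on each closed neighborhood, and invoke standard Fourier analysis on $\mathbb{Z}_n$.
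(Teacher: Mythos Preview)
Your argument is correct and takes a genuinely different route from the paper. The paper observes that, because the only connection-set element outside $A$ is the involution $t$, the bijection $a\mapsto(a,0)$, $ta\mapsto(a,1)$ is a graph isomorphism $\Gamma\cong\mathrm{Cay}(A\times\langle t\rangle,\{(s,0),(s^{-1},0),(s_0,0),(0,t)\})$ onto a Cayley graph on an \emph{abelian} group (this uses $st=ts^{-1}$ and $s_0t=ts_0$, so the neighbour sets match even though the map is not a group homomorphism), and then simply invokes the classification of quartic abelian Cayley graphs with perfect codes from \cite{CC19} (Theorems~4.4 and~4.6 there). Your approach is self-contained: the cyclic projection $\psi$ and the circulant identities on the fibre counts $D_c$ replace the appeal to \cite{CC19}, and the Fourier computation on $\mathbb{Z}_n$ does the work explicitly. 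The paper's route is shorter; yours is independent of external results and arguably more transparent about \emph{why} no perfect code can exist.

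Two minor points of presentation. First, in case~(a) your formula $\psi(s^i r^\varepsilon)=i$ is ambiguous, since there $r=s^{n/2}\in\langle s\rangle$ and the representation $s^ir^\varepsilon$ is not unique; you simply mean $\psi(s^i)=\psi(ts^i)=i$ for $0\le i<n$, with fibres of size~$2$. Second, you assert that the Fourier symbol is ``strictly positive'' in case~(a), but for odd $k$ the value $1+2\cos(2\pi k/n)$ can certainly be negative (e.g.\ $k=n/2$ gives $-1$). What you actually prove, and all you need, is that it is \emph{nonzero}: your parity argument (that $2\mid n$ and $3\mid n$ force $n/3,\,2n/3$ even) correctly excludes the only candidate zeros. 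With those wording fixes the proof is clean.
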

\begin{proof}
Without loss of generality, we may assume $S=\{s,s',s_0,t\}$, where $s_0$ is an involution and $s,s',s_0\in A$. Since $5\mid |A|$, $S=S^{-1}$ and $\langle S\rangle=G$, $A=\langle s,s',s_0\rangle$ and $s$ is not an involution. It follows that $s'=s^{-1}$.

Since $S=\{s,s^{-1},s_0,t\}$, we get $\Gamma\simeq{\rm Cay}(A\times \langle t\rangle,\{(s,0),(s^{-1},0),(s_0,0),(0,t)\})$. Note that $A\times \langle t\rangle$ is an abelian group. If $s_0\in\langle s\rangle$, from \cite[Theorem 4.4]{CC19}, then $\Gamma$ does not admit a perfect code since $(0,t)\notin \langle(s,0)\rangle$; if $s_0\notin\langle s\rangle$, from \cite[Theorem 4.6]{CC19}, then $\Gamma$ does not admit a perfect code since $(0,t)\notin \langle(s,0),(s_0,0)\rangle$.
\end{proof}
%\begin{lemma}\label{2.1}
	
\begin{lemma}\label{2.2} If $|S\cap tA|=3$, then $\Gamma$ does not admit a perfect code.
\end{lemma}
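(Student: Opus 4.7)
The plan is a contradiction argument via a group-algebraic identity combined with character theory on $\bar A:=A/\langle s_0\rangle$. Since $S=S^{-1}$ and $|S\cap A|=1$, the unique element of $S$ lying in $A$ coincides with its own inverse and is thus an involution; write $S=\{t,ts_1,ts_2,s_0\}$ with $s_0\in A\setminus\{e\}$ an involution (necessarily central in $G$) and $s_1,s_2\in A$. Assume a perfect code $D$ exists, and set $D_A=D\cap A$, $D_{tA}=D\cap tA$. Each $d\in D_A$ covers two vertices in $A$ and three in $tA$, while each $d\in D_{tA}$ covers three vertices in $A$ and two in $tA$; solving the resulting linear system yields $|D_A|=|D_{tA}|=n/5$.

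Next, I would translate the perfect-code identity $(e+S)\delta=\Sigma_G$ in $\mathbb{Z}[G]$ and split it into its $A$- and $tA$-components to obtain
\begin{equation*}
(e+s_0)\delta_A+(e+s_1^{-1}+s_2^{-1})\alpha=\Sigma_A,\qquad (e+s_0)\alpha+(e+s_1+s_2)\delta_A=\Sigma_A,
\end{equation*}
where $\delta_A=\sum_{d\in D_A}d$ and $\alpha:=t\cdot\delta_{tA}\in\mathbb{Z}[A]$. Projecting modulo $\langle s_0\rangle$ (using that no two elements of $D$ share an $\langle s_0\rangle$-coset, for otherwise they would mutually dominate via $s_0$), the images $\eta,\mu\in\mathbb{Z}[\bar A]$ are $\{0,1\}$-valued with support size $n/5$, and the identities become
\begin{equation*}
2\eta+T^{-1}\mu=2\Sigma_{\bar A},\qquad 2\mu+T\eta=2\Sigma_{\bar A},
\end{equation*}
with $T:=\bar e+\bar s_1+\bar s_2$. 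Eliminating $\mu$ produces $(T^{-1}T-4\bar e)\eta=2\Sigma_{\bar A}$.

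Applying a non-trivial character $\chi$ of $\bar A$ and using $\chi(T^{-1}T)=|\chi(T)|^2$, one obtains $(|1+\chi(\bar s_1)+\chi(\bar s_2)|^2-4)\chi(\eta)=0$. If $|1+\chi(\bar s_1)+\chi(\bar s_2)|\ne 2$ for every non-trivial $\chi$, then $\chi(\eta)=0$ for all such $\chi$, forcing $\eta=(2/5)\Sigma_{\bar A}$; the non-integrality of the coefficients contradicts $\eta\in\mathbb{Z}[\bar A]$.

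The main obstacle is to rule out the case where $|1+\chi(\bar s_1)+\chi(\bar s_2)|=2$ for some non-trivial $\chi$. Setting $a=\chi(\bar s_1)+\chi(\bar s_1)^{-1}$, $b=\chi(\bar s_2)+\chi(\bar s_2)^{-1}$, and $c=\chi(\bar s_1\bar s_2^{-1})+\chi(\bar s_1^{-1}\bar s_2)$ rewrites the condition as $a+b+c=1$ and $a^2+b^2+c^2-abc=4$. Niven's theorem, together with a bounded search over root-of-unity solutions, then pins $\{\chi(\bar s_1),\chi(\bar s_2)\}$ to one of the three pairs $\{\zeta_6,\zeta_6^{-1}\}$, $\{\zeta_6,\zeta_3\}$, or $\{\zeta_6^{-1},\zeta_3^{-1}\}$. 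Each of these forces $\bar A$ to admit a cyclic quotient of order divisible by $6$, which combined with $5\mid|\bar A|$ gives $30\mid|\bar A|$. I expect to combine this with the combinatorial content of the two identities -- specifically that the residual set $N:=\bar A\setminus(\mathrm{supp}\,\eta\cup\mathrm{supp}\,\mu)$ has size $n/10$ and that $\bar s_1 N,\bar s_2 N\subseteq\mathrm{supp}\,\mu$ while $\bar s_1^{-1}N,\bar s_2^{-1}N\subseteq\mathrm{supp}\,\eta$ -- to derive a contradiction case by case; if the direct argument proves unwieldy, one may invoke the classification of perfect-code-admitting cubic Cayley graphs on generalized dihedral groups from \cite{CC22} applied to the cubic quotient graph $\mathrm{Cay}(\bar G,\{\bar t,\bar t\bar s_1,\bar t\bar s_2\})$, in which $\bar D$ behaves as an independent set whose complement induces a perfect matching.
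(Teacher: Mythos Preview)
Your algebraic setup—the splitting of the perfect-code identity into $A$- and $tA$-components, the projection modulo the central involution $s_0$, and the character equation $(|\chi(T)|^{2}-4)\chi(\eta)=0$—is correct and quite elegant. In the generic situation where no nontrivial character satisfies $|\chi(T)|=2$, the conclusion $\eta=\tfrac{2}{5}\Sigma_{\bar A}$ does yield an immediate contradiction. This is a genuinely different route from the paper, which instead fixes $t\in D$ and performs a purely combinatorial three-branch case analysis on which neighbour of $s_0t$ lies in $D$, reaching a contradiction in each branch by tracking forced memberships and non-memberships along a few dozen vertices.

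The gap in your argument is the exceptional case $|\chi(T)|=2$, which you yourself flag as the ``main obstacle'' but do not resolve. First, Niven's theorem alone does not pin down the root-of-unity pairs $(x,y)$ with $|1+x+y|=2$: Niven controls a single rational cosine, whereas here one has the relation $\cos\theta_1+\cos\theta_2+\cos(\theta_1-\theta_2)=\tfrac12$, which requires something like the Conway--Jones classification of rational-valued sums of roots of unity; the promised ``bounded search'' is not carried out. Second, even granting your three pairs and the consequence $30\mid|\bar A|$, nothing prevents $|\bar A|$ from being a multiple of $30$, so no contradiction has been reached; your combinatorial observations about the set $N$ are correct but remain unused. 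Third, the fallback to \cite{CC22} does not apply as stated: although $\bar D$ is indeed independent in the cubic quotient and its complement carries a perfect matching, $\bar D$ is \emph{not} a perfect code there (it has size $\tfrac{2}{5}|\bar G|$, not $\tfrac{1}{4}|\bar G|$), and \cite{CC22} classifies perfect codes, not this weaker structure. As written, the proof is incomplete precisely at the point you identify as difficult.
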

\begin{proof} Without loss of generality, we may assume $S=\{s,t,ts_0,ts_1\}$, where $s,s_0,s_1\in A$. Since $\Gamma$ is undirected, $s$ is an involution. The fact $5\mid |A|$ implies $10\mid |A|$. Let $m$ be the minimum positive integer $i$ such that $s_1^i\in\langle s_0\rangle$. It follows that $1\leq m\leq o(s_1)$ and $s_1^m=s_0^h$ for some $h\in\{0,1,\ldots,o(s_0)-1\}$.

Assume the contrary, namely, $\Gamma$ admits a perfect code $D$. Since $\Gamma$ is vertex transitive, we may assume $t\in D$. Since $t,s_0t\in N(s_0)$, we have $s_0,s_0t\notin D$. The fact $N(s_0t)=\{s_0,s_0st,s_0^2,s_0s_1\}$ implies $s_0st,s_0^2$ or $s_0s_1\in D$.
	
Suppose $s_0st\in D$. Since $s_0st,s_0^2st,s_0^2\in N(s_0^2s)$ and $s_0st,s_0s_1\in N(s_0s_1s)$, one gets $s_0^2st,s_0^2,s_0^2s,s_0s_1,s_0s_1s\notin D$. Since $t\in D$ and $t,s_1t\in N(s_1)$, we have $s_1t\notin D$. Note that $s_0t\notin D$. The fact $N(s_0s_1)=\{s_0s_1s,s_0s_1t,s_1t,s_0t\}$ implies $s_0s_1t\in D$. Since $N(s_0^2s_1)=\{s_0^2s_1s,s_0^2s_1t,s_0s_1t,s_0^2t\}$, one has $s_0^2s_1,s_0^2s_1s,s_0^2t\notin D$. Since $N(s_0^2t)=\{s_0^2st,s_0^2,s_0^3,s_0^2s_1\}$, one gets $s_0^3\in D$. Since $s_0^3s\in N(s_0^3)$, we obtain $s_0^3s\notin D$, which implies $D\cap (N(s_0^2st)\cup\{s_0^2st\})=D\cap\{s_0^2t,s_0^2s,s_0^3s,s_0^2s_1s,s_0^2st\}=\emptyset$, a contradiction.
	
Suppose $s_0^2\in D$. Since $s_0^2,s_0st\in N(s_0^2s)$, $s_0^2,s_0^2s_1\in N(s_0^2t)$ and $s_0^2,s_0s_1\in N(s_0t)$, one gets $s_0st,s_0^2s,s_0^2s_1,s_0s_1\notin D$. Since $t\in D$ and $t,s_0s\in N(s_0)$, we have $s_0s\notin D$. Note that $s_0t\notin D$. The fact $N(s_0st)=\{s_0t,s_0s,s_0^2s,s_0s_1s\}$ implies $s_0s_1s\in D$. Since $s_0s_1s\in N(s_1st)$ and $s_0s_1s,s_0s_1t\in N(s_0s_1st)$, one gets $s_1st,s_0s_1t,s_0s_1st\notin D$. Since $t,s_1t\in N(s_1)$, we obtain $s_1t,s_1\notin D$. Since $N(s_1t)=\{s_1st,s_1,s_0s_1,s_1^2\}$ and $N(s_0s_1t)=\{s_0s_1st,s_0s_1,s_0^2s_1,s_0s_1^2\}$, we get $s_1^2,s_0s_1^2\in D$, which implies $s_1^2,s_0s_1^2\in D\cap N(s_1^2t)$, a contradiction.
	
Suppose $s_0s_1\in D$. Since $s_0s_1,s_0s_1st\in N(s_0s_1s)$, $s_0s_1,s_0^2s_1\in N(s_0s_1t)$ and $s_0s_1,s_0st\in N(s_0t)$, we have $s_0s_1st,s_0s_1s,s_0^2s_1,s_0s_1t,s_0st\notin D$. Since $t\in D$ and $t,s_0s\in N(s_0)$, one gets $s_0s\notin D$. Note that $s_0t\notin D$. The fact $N(s_0st)=\{s_0t,s_0s,s_0^2s,s_0s_1s\}$ implies $s_0^2s\in D$. Since $N(s_0^2st)=\{s_0^2t,s_0^2s,s_0^3s,s_0^2s_1s\}$, we obtain $s_0^2st,s_0^2t,s_0^2s_1s\notin D$. Since $N(s_0^2s_1s)=\{s_0^2s_1,s_0^2s_1st,s_0s_1st,s_0^2st\}$ and $N(s_0^2s_1)=\{s_0^2s_1s,s_0^2s_1t,s_0s_1t,s_0^2t\}$, we get $s_0^2s_1st,s_0^2s_1t\in D$. But $s_0^2s_1st$ and $s_0^2s_1t$ are adjacent in $\Gamma$, a contradiction.
	
This completes the proof of this lemma.
\end{proof}
%\begin{lemma}\label{2.2}

In the remainder of section, we only need to consider $|S\cap tA|=2$. Without loss of generality, we may assume $S=\{s_1,s_2,t,ts_0\}$, where $s_0,s_1,s_2\in A$.

\begin{lemma}\label{2.3}
If $s_1$ and $s_2$ are involutions, then $\Gamma$ does not admit a perfect code.
\end{lemma}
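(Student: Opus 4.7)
The plan is to argue by contradiction, in the same spirit as Lemmas~\ref{2.1} and \ref{2.2}. Assume $D$ is a perfect code in $\Gamma$; by vertex-transitivity I may take $t\in D$. Using that $s_1,s_2$ are involutions in the abelian group $A$ (hence $ts_i=s_it$) and that $ts_0t=s_0^{-1}$, one has $N[t]=\{t,e,s_1t,s_2t,s_0^{-1}\}$. Since every vertex of $N[t]\setminus\{t\}$ must be uniquely dominated by $t$, expanding the closed neighbourhoods of $e$, $s_1t$, $s_2t$ and $s_0^{-1}$ yields the initial exclusion list
\[
\{e,s_1,s_2,ts_0,s_0^{-1},s_0t,s_1t,s_2t,s_1s_0^{-1},s_2s_0^{-1},s_1s_2t\}\cap D=\emptyset.
\]

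Next I would determine the forced dominators of the uncovered vertices $s_1,s_2,ts_0$. From $N[s_1]=\{s_1,e,s_1s_2,s_1t,ts_0s_1\}$ together with the first-level exclusions, the dominator of $s_1$ must be $s_1s_2$ or $ts_0s_1$; symmetrically for $s_2$, while the dominator of $ts_0$ lies in $\{s_0,ts_0s_1,ts_0s_2\}$. I then split on whether $s_1s_2\in D$. If $s_1s_2\notin D$ then $ts_0s_1$ and $ts_0s_2$ both lie in $D$; a direct computation using $(ts_0)^{-1}=ts_0$ and $s_it=ts_i$ yields $ts_0s_i(ts_0)^{-1}=s_i\in S$, so $ts_0$ is doubly dominated---a contradiction. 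Hence $s_1s_2\in D$, which in turn excludes $ts_0s_1$ and $ts_0s_2$ from $D$ (each is adjacent to an $s_i$ uniquely dominated by $s_1s_2$), forcing $s_0\in D$. If $s_0$ is an involution, or more generally $s_0\in\{s_1,s_2,s_1s_2\}$, then $s_0=s_0^{-1}\in N[t]$ is doubly dominated; so from now on one may assume $o(s_0)\ge 3$ and $s_0\notin\langle s_1,s_2\rangle$.

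The final step is to derive a contradiction from $\{t,s_1s_2,s_0\}\subseteq D$. I would compare the three forced closed neighbourhoods $N[t]$, $N[s_1s_2]=\{s_1s_2,s_1,s_2,s_1s_2t,ts_0s_1s_2\}$ and $N[s_0]=\{s_0,s_1s_0,s_2s_0,ts_0,ts_0^2\}$. A short list of coincidences of the form $s_0^k\in\langle s_1,s_2\rangle s_0^{k'}$ for small $k,k'$ (for instance $s_0^2\in\{s_1,s_2,s_1s_2\}$) each force two of these neighbourhoods to overlap and yield a doubly dominated vertex. In the residual generic subcase one continues the propagation: the forced dominator of a further uncovered vertex---a natural candidate being $s_0s_1s_2$---produces a fourth element of $D$ whose closed neighbourhood then clashes with one of the three already known.

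The main obstacle is this last bookkeeping step. The branching is controlled but multi-layered: the extra involution $s_2$ (absent in Lemma~\ref{2.2}) creates additional $4$-cycles such as $e,s_1,s_1s_2,s_2$, giving more ways for the forced closed neighbourhoods to collide, so enumerating the coincidence subcases cleanly without missing one is the most delicate part.
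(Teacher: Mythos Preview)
The proposal is incomplete, and the gap is precisely where you yourself say ``the main obstacle'' lies. You establish correctly that in the non-degenerate subcase one is forced into $\{t,\,s_1s_2,\,s_0\}\subseteq D$, but from there you only assert that ``one continues the propagation'' until a clash appears. That is not a proof: you give no bound on how many further steps the propagation needs, and in fact the argument must eventually walk along the whole $s_0$-orbit (whose length $o(s_0)$ is unbounded) before it can close up. The ``short list of coincidences'' you allude to is not short either---for each small $k$ one would need to dispose of $s_0^k\in\{e,s_1,s_2,s_1s_2\}$ separately, and the generic case still stands after that. As written, the hard part of the lemma is simply not done.

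The paper bypasses all of this with a structural observation you have not exploited. Because $s_1,s_2$ are involutions in $A$, one has $ts_it=s_i^{-1}=s_i$, so $s_1$ and $s_2$ are \emph{central} in $G$. Coupled with the elementary fact that $\mathrm{Cay}(\langle t,ts_0\rangle,\{t,ts_0\})\cong\mathrm{Cay}(\mathbb{Z}_{2o(s_0)},\{\pm1\})$, this shows $\Gamma$ is isomorphic to a quartic Cayley graph on the abelian group $\mathbb{Z}_{2o(s_0)}\times\mathbb{Z}_2$ or $\mathbb{Z}_{2o(s_0)}\times\mathbb{Z}_2\times\mathbb{Z}_2$ (three subcases, according to where $s_2$ sits relative to $\langle s_0\rangle$ and $\langle s_0,s_1\rangle$), and the conclusion follows directly from \cite[Theorems~4.4--4.6]{CC19}. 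Your propagation scheme, even if completed, would amount to re-proving those theorems by hand in this particular instance; recognising the centrality of $s_1,s_2$ is the missing idea that collapses the casework.
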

\begin{proof}
Since $s_1,s_2$ are involutions and $\langle t,ts_0\rangle=\langle s_0,t\rangle$, we may assume $s_1\notin\langle t,ts_0\rangle$. Note that ${\rm Cay}(\langle t, ts_0\rangle,\{t,ts_0\})\simeq {\rm Cay}(\mathbb{Z}_{2o(s_0)},\{\pm 1\})$. If $s_2\in\langle t,ts_0\rangle=\langle s_0,t\rangle$, then $s_2=s_0^{o(s_0)/2}$ with $2\mid o(s_0)$, and so $\Gamma\simeq{\rm Cay}(\mathbb{Z}_{2o(s_0)}\times\mathbb{Z}_{2},\{(\pm 1,0),(0,1),(o(s_0),0)\})$, which imply that $\Gamma$ does not admit a perfect code from \cite[Theorem 4.4]{CC19}. If $s_2\notin\langle t,ts_0\rangle$ and $s_2\in\langle t,ts_0,s_1\rangle$, then $s_2=s_0^{o(s_0)/2}s_1$ with $2\mid o(s_0)$, and so $\Gamma\simeq{\rm Cay}(\mathbb{Z}_{2o(s_0)}\times\mathbb{Z}_{2},\{(\pm 1,0),(0,1),(o(s_0),1)\})$, which imply that $\Gamma$ does not admit a perfect code from \cite[Theorem 4.5]{CC19}. If $s_2\notin\langle t,ts_0\rangle$ and $s_2\notin\langle t,ts_0,s_1\rangle$, then $\Gamma\simeq{\rm Cay}(\mathbb{Z}_{2o(s_0)}\times\mathbb{Z}_{2}\times\mathbb{Z}_{2},\{(\pm 1,0,0),(0,1,0),(0,0,1)\})$, which implies that $\Gamma$ does not admit a perfect code from \cite[Theorem 4.6]{CC19}. 
\end{proof}
%\begin{lemma}\label{2.3}

Next we consider the case that $s_1$ or $s_2$ is not an involution. Since $\Gamma$ is undirected, we get $s_2=s_1^{-1}$. Since $\langle S\rangle=G$, we have $A=\langle s_1,s_0\rangle$. Let $m$ be the minimum positive integer $i$ such that $s_1^i\in\langle s_0\rangle$. It follows that $1\leq m\leq o(s_1)$ and $s_1^m=s_0^h$ for some $h\in\{0,1,\ldots,o(s_0)-1\}$.

\begin{lemma}\label{2.4}
The graph $\Gamma$ is isomorphic to ${\rm Cay}(A',\{\pm\alpha,\pm\beta\})$, where $A'$ is an abelian group with $|A'|=2|A|$, $o(\alpha)=2o(s_0)$ and $o(\beta)=o(s_1)$. Moreover, if $\varphi$ is a mapping from $A'$ to $G$ such that
\begin{align}
\varphi(a\alpha+b\beta)=
\begin{cases}
	s_0^{a/2}s_1^b&{\rm if}~2\mid a\\
	s_0^{(a-1)/2}s_1^bt&{\rm if}~2\nmid a
\end{cases}\nonumber
\end{align}
for all $0\leq a\leq 2o(s_0)-1$ and $0\leq b\leq m-1$, then $\varphi$ is an isomorphism from ${\rm Cay}(A',\{\pm\alpha,\pm\beta\})$ to $\Gamma$.
\end{lemma}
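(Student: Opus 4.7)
The plan is to build $A'$ by ``unfolding'' the bipartite cycles in $\Gamma$ traced alternately by the generators $t$ and $ts_0$. Concretely, the subgraph of $\Gamma$ generated by $\{t,ts_0\}$ consists of vertex-disjoint $2o(s_0)$-cycles, one in each coset of $\langle s_0,t\rangle$; these cycles will correspond to the cyclic subgroup generated by $\alpha$ of order $2o(s_0)$ in $A'$. Let $h\in\{0,1,\ldots,o(s_0)-1\}$ be defined by $s_1^m=s_0^h$, and set $A':=\mathbb{Z}^2/L$, where $L=\langle(2o(s_0),0),(2h,-m)\rangle$, with $\alpha,\beta$ the images of the standard basis vectors. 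The determinant $|\det L|=2m\,o(s_0)$ gives $|A'|=2m\,o(s_0)=2|A|$, since minimality of $m$ forces $[A:\langle s_0\rangle]=m$ and hence $|A|=m\,o(s_0)$. Short lattice calculations yield $o(\alpha)=2o(s_0)$ and $o(\beta)=m\,o(s_0)/\gcd(o(s_0),h)$, and the latter matches the formula for $o(s_1)$ obtained by solving $s_1^k=e$ using $s_1^m=s_0^h$ and the minimality of $m$.

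Next, define $\varphi$ on the canonical coset representatives $\{a\alpha+b\beta:0\le a<2o(s_0),\ 0\le b<m\}$ by the formula in the statement. Since every element of $G$ admits a unique expression of the form $s_0^is_1^j$ or $s_0^is_1^jt$ with $0\le i<o(s_0)$ and $0\le j<m$, and $|A'|=2|A|=|G|$, $\varphi$ is a bijection onto $G$. Note that $\varphi$ is \emph{not} a group homomorphism, because $A'$ is abelian while $G$ is not; the claim is only that $\varphi$ induces a graph isomorphism.

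For adjacency preservation, it suffices to verify, for every $x\in A'$ and every $\gamma\in\{\pm\alpha,\pm\beta\}$, that the corresponding quotient $\varphi(x+\gamma)\varphi(x)^{-1}$ lies in $S$ (equivalently, by $S=S^{-1}$, that $\varphi(x)^{-1}\varphi(x+\gamma)\in S$). The verification splits into four cases according to $\gamma$ and is further refined by the parity of $a$ where $x=a\alpha+b\beta$, and uses repeatedly the generalized-dihedral identity $ta=a^{-1}t$ for $a\in A$ together with its corollary $s_0^{-1}t=ts_0$; after applying these identities the quotient collapses to one of $s_1^{\pm1},t,ts_0$. The main obstacle is handling the two wrap-arounds in the coset representatives: at $a=2o(s_0)-1$ under $\gamma=\alpha$, the relation $2o(s_0)\alpha=0$ is applied routinely, but at $b=m-1$ under $\gamma=\beta$ one must rewrite $x+\beta$ via the defining relation $m\beta=2h\alpha$ of $L$ and then use the identity $s_1^m=s_0^h$ to confirm that $\varphi$ of the reduced representative equals $\varphi(x)\cdot s_1$. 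This last consistency is precisely why the relation $(2h,-m)\in L$ was built into $A'$, and it closes the loop.
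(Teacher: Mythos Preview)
Your argument is correct in substance and takes a more direct, self-contained route than the paper. The paper does not construct $A'$ explicitly: it first exhibits an isomorphism from $\Gamma$ to an auxiliary ``stacked cycles'' graph $\Gamma_{2o(s_0),m,2(o(s_0)-h)}$ on the vertex set $\mathbb{Z}_{2o(s_0)}\times\{0,\ldots,m-1\}$, and then invokes \cite[Proposition~2.3]{YYF} to identify that graph with a quartic Cayley graph on an abelian group of the required shape. Your presentation builds $A'=\mathbb{Z}^2/\langle(2o(s_0),0),(2h,-m)\rangle$ by hand, verifies $|A'|$, $o(\alpha)$, $o(\beta)$ via elementary lattice computations, and checks adjacency case by case; this avoids the external citation entirely and makes the role of the relation $s_1^m=s_0^h$ (encoded as $m\beta=2h\alpha$) completely transparent. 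The paper's approach is shorter on the page but outsources the key step; yours is longer but exposes the mechanism.

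One small correction: the parenthetical ``equivalently, by $S=S^{-1}$, that $\varphi(x)^{-1}\varphi(x+\gamma)\in S$'' is not actually equivalent to $\varphi(x+\gamma)\varphi(x)^{-1}\in S$ for non-abelian $G$; the condition $S=S^{-1}$ only gives $\varphi(x+\gamma)\varphi(x)^{-1}\in S\Leftrightarrow \varphi(x)\varphi(x+\gamma)^{-1}\in S$. In this paper the operative convention (as seen from all the neighborhood computations) is right multiplication, so the correct check is $\varphi(x)^{-1}\varphi(x+\gamma)\in S$, and indeed it is only under this convention that the quotients collapse to $t$, $ts_0$, $s_1^{\pm1}$ as you assert. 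With that adjustment the verification goes through exactly as you outline, including the two wrap-around cases.
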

\begin{proof}
Let $\Gamma_{2o(s_0),m,2(o(s_0)-h)}$ be the graph with vertex set $\mathbb{Z}_{2o(s_0)}\times\{0,1,\ldots,m-1\}$ whose edge set consists of $\{(a,b),(a+1,b)\}$, $\{(a,c),(a,c+1)\}$ and $\{(a,m-1),(a+2h,0)\}$ for $a\in\mathbb{Z}_{2o(s_0)}$, $b\in\{0,1,\ldots,m-1\}$ and $c\in\{0,1,\ldots,m-2\}$.
	
Let $\sigma$ be a mapping from $G$ to $\mathbb{Z}_{2o(s_0)}\times\{0,1,\ldots,m-1\}$ such that $\sigma(s_0^as_1^bt^{\epsilon})=(2a+\epsilon,b)$ for $0\leq a\leq o(s_0)-1$, $0\leq b\leq m-1$ and $0\leq\epsilon\leq1$. Suppose $\sigma(s_0^as_1^bt^{\epsilon})=\sigma(s_0^{a'}s_1^{b'}t^{\epsilon'})$ for $0\leq a,a'\leq o(s_0)-1$, $0\leq b,b'\leq m-1$ and $0\leq\epsilon,\epsilon'\leq1$. Since $2a+\epsilon=2a'+\epsilon'$, one has $\epsilon=\epsilon'$, and so $a=a'$. Hence, $b=b'$. Then $\sigma$ is injective. Let $0\leq i\leq 2o(s_0)-1$ and $0\leq j\leq m-1$. If $2\mid i$, then $\sigma(s_0^{i/2}s_1^j)=(i,j)$; if $2\nmid i$, then $\sigma(s_0^{(i-1)/2}s_1^jt)=(i,j)$. Then $\sigma$ is surjective. Thus, $\sigma$ is a bijection.
	
Note that $E\Gamma$ consists of $\{s_0^as_1^b,s_0^as_1^bt\},\{s_0^{a+1}s_1^b,s_0^as_1^bt\}$ and $\{s_0^{a}s_1^{b}t^{\epsilon},s_0^as_1^{b+1}t^{\epsilon}\}$ for $0\leq a\leq o(s_0)-1$, $0\leq b\leq m-1$ and $0\leq\epsilon\leq 1$. Since $s_1^m=s_0^h$, $\sigma$ is an isomorphism from $\Gamma$ to $\Gamma_{2o(s_0),m,2(o(s_0)-h)}$. By \cite[Proposition 2.3]{YYF}, the first statemet is valid. In view of the proof of \cite[Proposition 2.3]{YYF}, the second statement is also valid.
\end{proof}
%\begin{lemma}\label{2.4}

\begin{prop}\label{2.5}
The following hold:
\begin{enumerate}
	\item\label{2.5-1} $\Gamma$ admits a perfect code if and only if $5\mid o(s_0)$ and $h\equiv5u/2\pm m~({\rm mod}~o(s_0))$ for some $u\in\{0,2,4,\ldots,2(o(s_0)/5-1)\}$;
		
	\item\label{2.5-2} If $\Gamma$ admits a perfect code containing the element $t$, then $$\bigcup_{k=0}^{m-1}\bigcup_{u=0}^{2(o(s_0)/5-1)}\{s_0^{5u/2+\epsilon k}s_1^{k}t,s_0^{3+5u/2+\epsilon k}s_1^{k}\}$$ are all perfect code for $\epsilon\in\{\pm1\}$.
\end{enumerate}
\end{prop}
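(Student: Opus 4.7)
The plan is to transport the problem through the isomorphism $\varphi$ of Lemma~\ref{2.4} and reduce it to the classification of perfect codes in the quartic abelian Cayley graph $\Gamma':={\rm Cay}(A',\{\pm\alpha,\pm\beta\})$, which is already available in \cite{CC19}. First I would pin down the structure of $A'$: the relation $s_1^m=s_0^h$ combined with the formula for $\varphi$ forces $m\beta=2h\alpha$, so $A'$ is the abelian group generated by $\alpha,\beta$ subject only to $2o(s_0)\alpha=0$ and $m\beta=2h\alpha$, with $|A'|=2|A|=2mo(s_0)$.

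For part (i), I would apply the quartic case of \cite{CC19} to $\Gamma'$. That result, in the present notation, states that $\Gamma'$ admits a perfect code iff $10\mid 2o(s_0)$ (equivalently $5\mid o(s_0)$) and $2h\equiv 5u\pm 2m\pmod{2o(s_0)}$ for some even $u\in\{0,2,\ldots,2(o(s_0)/5-1)\}$; dividing by $2$ yields $h\equiv 5u/2\pm m\pmod{o(s_0)}$, which is exactly the condition in~(i). The parity constraint on $u$ arises naturally: a perfect code containing the vertex $\alpha=\varphi^{-1}(t)$ must be a translate of a specific index-$5$ pattern in $A'$, and the requirement that one representative per pair has odd first coordinate forces $u$ to be even.

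For (ii), the same classification provides the explicit list of perfect codes of $\Gamma'$. Up to translation these consist, for each $\epsilon\in\{\pm 1\}$, of pairs
\[
\bigl\{\,(1+5u+2\epsilon k)\alpha+k\beta,\;(6+5u+2\epsilon k)\alpha+k\beta\,\bigr\},\qquad 0\le k\le m-1,\ u\text{ as above}.
\]
Requiring $t\in D$ is equivalent to $\alpha\in\varphi^{-1}(D)$, which fixes the translate uniquely. Applying $\varphi$, the first representative (odd first coordinate) maps to $s_0^{5u/2+\epsilon k}s_1^k t$, while the second (even first coordinate) maps to $s_0^{3+5u/2+\epsilon k}s_1^k$. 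This is precisely the family listed in~(ii).

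The principal obstacle is the bookkeeping. Because both $t$ and $ts_0$ project to odd first coordinates in $A'$, the ``shift'' indexing inherited from \cite{CC19} is with respect to the generator $\alpha$ of order $2o(s_0)$ rather than $s_0$ of order $o(s_0)$; this is why $u$ in (i) is constrained to even values and $5u/2$ (rather than $5u$) appears in (ii). I would need to verify carefully that the two sign choices $\epsilon\in\{\pm 1\}$ in (ii) correspond exactly to the two signs in $5u/2\pm m$ in (i), and that distinct triples $(u,\epsilon,k)$ produce distinct perfect codes. Once this matching is established, both parts follow immediately from the abelian case via Lemma~\ref{2.4}.
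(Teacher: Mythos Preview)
Your approach is essentially identical to the paper's: it too transports the problem through the isomorphism $\varphi$ of Lemma~\ref{2.4} to the abelian Cayley graph ${\rm Cay}(A',\{\pm\alpha,\pm\beta\})$, invokes \cite[Theorem~4.11 and Lemmas~4.7,~4.10]{CC19} for the characterization and the explicit list of perfect codes containing $\alpha$, and then pushes the result back via $\varphi$. One small correction: in part~(i) the evenness of $u$ is forced purely algebraically by the relation $m\beta=2h\alpha$ (since $5u\pm 2m\equiv 2h\pmod{2o(s_0)}$ makes $5u$ even), not by any requirement that $t$ lie in the code---that normalization only enters in part~(ii).
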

\begin{proof}
By Lemma \ref{2.4}, there exists an isomorphism $\varphi$ from ${\rm Cay}(A',\{\pm\alpha,\pm\beta\})$ to $\Gamma$ such that \begin{align}
\varphi(a\alpha+b\beta)=
\begin{cases}
	s_0^{a/2}s_1^b&{\rm if}~2\mid a\\
	s_0^{(a-1)/2}s_1^bt&{\rm if}~2\nmid a
\end{cases}\nonumber
\end{align}
for all $0\leq a\leq 2o(s_0)-1$ and $0\leq b\leq m-1$, where $A'$ is an abelian group with $|A'|=2|A|$, $o(\alpha)=2o(s_0)$ and $o(\beta)=o(s_1)$.
	
\ref{2.5-1} By Lemma \ref{2.4} and \cite[Theorem 4.11]{CC19}, $\Gamma$ admits a perfect code if and only if $5\mid o(\alpha)$ and $m\beta=(5u\pm 2m)\alpha$ for some $u\in\{0,1,\ldots,o(\alpha)/5-1\}$. Since $s_0^h=s_1^m=\varphi(m\beta)=\varphi((5u\pm 2m)\alpha)$, $\Gamma$ admits a perfect code if and only if $5\mid o(s_0)$ and $s_0^h=\varphi((5u\pm 2m)\alpha)=s_0^{5u/2\pm m}$ for some $u\in\{0,2,4,\ldots,2(o(s_0)/5-1)\}$. Then \ref{2.5-1} is valid.
	
\ref{2.5-2} By the proofs of \cite[Lemmas 4.7 and 4.10]{CC19}, $\bigcup_{k=0}^{m-1}\bigcup_{u=0}^{o(\alpha)/5-1}\{(5u+\epsilon 2k+1)\alpha+k\beta\}$ for some $\epsilon\in\{\pm1\}$ are all perfect codes of ${\rm Cay}(A',\{\pm\alpha,\pm\beta\})$ containing $\alpha$. Let $D$ be a perfect code in $\Gamma$ containing the element $t$. Since $\varphi(\alpha)=t$, we get $D=\bigcup_{k=0}^{m-1}\bigcup_{u=0}^{2(o(s_0)/5-1)}\{s_0^{5u/2+\epsilon k}s_1^{k}t,s_0^{3+5u/2+\epsilon k}s_1^{k}\}$ for some $\epsilon \in\{\pm1\}$. Then \ref{2.5-2} is also valid.
\end{proof}
%\begin{prop}\label{2.5}

\section{$|S\cap tA|=4$}

In this section, we consider the case that $|S\cap tA|=4$. Without loss of generality, we may assume $S=\{t,ts_0,ts_1,ts_2\}$, where $s_0,s_1,s_2\in A$. Since $\langle S\rangle=G$, one gets $A=\langle s_0,s_1,s_2\rangle$. Let $n$ be the order of $s_0$, and $m,l$ be the minimum positive integer such that $s_1^m\in \langle s_0\rangle$ and $s_2^l\in \langle s_0,s_1\rangle$ for $m\in \{1,2,\ldots,o(s_1)\}$ and $l\in\{1,2,\ldots,o(s_2)\}$. 

For an integer $k$, let $\sigma(k)=(1-(-1)^k)/2$. The following result constructs perfect codes in ${\rm Cay}(G,S)$ under the certain assumptions.

\begin{prop}\label{v-2/3/4}
Let $v\in\{2,3,4\}$ and $\{a,b\}=\{2+\sigma(v),v-\sigma(v)-1\}$. Suppose $5\mid n$. If $s_1^{m}=s_0^{5\alpha_1-am}$ and $s_2^{l}=s_0^{5\alpha_2-bl+aj}s_1^{j}$ with $0\leq \alpha_1,\alpha_2\leq n/{5}-1$ and $0\leq j\leq m-1$, then $$D:=\bigcup_{\alpha'=0}^{n/5-1}\bigcup_{j'=0}^{m-1}\bigcup_{k'=0}^{l-1}\left(\{s_0^{5\alpha'+aj'+bk'}s_1^{j'}s_2^{k'}t\}\cup\{s_0^{5\alpha'+aj'+bk'+v}s_1^{j'}s_2^{k'}\}\right)$$ is a perfect code of $\Gamma$.
\end{prop}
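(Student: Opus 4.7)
The strategy is to realize $D$ as a union of two cosets of a distinguished subgroup $H \le A$ of index $5$, and then analyze the closed neighborhoods coset-by-coset. Set
\[ H := \{s_0^{5\alpha' + aj' + bk'} s_1^{j'} s_2^{k'} : 0 \le \alpha' \le n/5-1,\ 0 \le j' \le m-1,\ 0 \le k' \le l-1\}. \]
The first step is to verify $H$ is a subgroup of $A$. Closure is clear when the $j'$- and $k'$-indices stay in range; otherwise I use the hypothesized relations $s_1^m = s_0^{5\alpha_1 - am}$ and $s_2^l = s_0^{5\alpha_2 - bl + aj} s_1^{j}$ to reduce overflows. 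The point is that the correction constants $-am$ and $-bl + aj$ are calibrated to exactly compensate the would-be shift in the $s_0$-exponent (so that $a(j_1'+j_2') - am = aj''$ with $j'' = j_1'+j_2'-m$, and similarly for $k$-overflow), while the $5\alpha_1$ and $5\alpha_2$ terms simply rename $\alpha'$. Uniqueness of the canonical form in $A$ then gives $|H| = nml/5 = |A|/5$, and the image of $s_0$ generates the cyclic quotient $A/H \cong \mathbb{Z}_5$ with $\bar s_1 = -a\bar s_0$ and $\bar s_2 = -b\bar s_0$ (read off from $s_0^a s_1, s_0^b s_2 \in H$).

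Once $H$ is in place, observe $D = s_0^v H \,\sqcup\, H t$, so $|D| = 2|H| = |G|/5$, the required cardinality for a perfect code in the $4$-regular graph $\Gamma$. Using $ts = s^{-1}t$ for $s \in A$, I compute, for each $d_A \in H$,
\[ N[d_A t] = \{d_A t,\ d_A,\ s_0 d_A,\ s_1 d_A,\ s_2 d_A\}, \]
\[ N[s_0^v d_A] = \{s_0^v d_A,\ s_0^v d_A\, t,\ s_0^{v-1} d_A\, t,\ s_0^v s_1^{-1} d_A\, t,\ s_0^v s_2^{-1} d_A\, t\}. \]
As $d_A$ ranges over $H$, these sweep out the $A$-set $\{e, s_0, s_1, s_2, s_0^v\} \cdot H$ and the $tA$-set $\{e, s_0^v, s_0^{v-1}, s_0^v s_1^{-1}, s_0^v s_2^{-1}\}\cdot Ht$.

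It therefore suffices to show that each of the two five-element sets of representatives is a transversal of $H$ in $A$: together with the cardinality identity $|D|(|S|+1) = |G|$, this upgrades the covering to a partition. Mapping to $A/H \cong \mathbb{Z}_5$ via $\bar s_0 \mapsto 1$, $\bar s_1 \mapsto -a$, $\bar s_2 \mapsto -b$, the two transversal conditions become
\[ \{0,\ 1,\ -a,\ -b,\ v\} = \mathbb{Z}_5 \quad\text{and}\quad \{0,\ v,\ v-1,\ v+a,\ v+b\} = \mathbb{Z}_5. \]
Both are symmetric in $a$ and $b$, so the unordered specification $\{a, b\} = \{2+\sigma(v), v-\sigma(v)-1\}$ is enough, and a brief case check for $v \in \{2, 3, 4\}$ confirms both identities. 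The main obstacle is the first step: the subgroup argument requires carefully tracking how the two relations interact when a single product causes both a $j$- and a $k$-overflow, since the $s_1^{j}$ factor appearing in the second relation can itself trigger another application of the first. Once the iteration is seen to terminate inside $H$, the remaining coset bookkeeping is routine.
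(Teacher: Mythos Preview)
Your argument is correct and takes a genuinely different route from the paper's. The paper proceeds by brute force: after noting $|D|=|G|/5$, it fixes an arbitrary vertex $x$, writes its $s_0$-exponent as $5\alpha'+aj'+bk'+r$ with $r\in\{0,1,2,3,4\}$, and for each value of $r$ (and each of the two parities of $x$) exhibits by hand an element of $D$ dominating $x$, invoking the relations $s_1^m=s_0^{5\alpha_1-am}$ and $s_2^l=s_0^{5\alpha_2-bl+aj}s_1^j$ exactly when an index wraps around. Your approach packages all of this into a single structural observation: the relations are precisely what is needed to make the assignment $s_0\mapsto 1$, $s_1\mapsto -a$, $s_2\mapsto -b$ a well-defined surjection $A\to\mathbb{Z}_5$, whose kernel is $H$; then $D=s_0^vH\cup Ht$, and the perfect-code property reduces to the two transversal checks $\{0,1,-a,-b,v\}=\mathbb{Z}_5$ and $\{0,v,v-1,v+a,v+b\}=\mathbb{Z}_5$, each a three-line verification.

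What your approach buys is a clear explanation of \emph{why} the specific pairs $\{a,b\}$ and offsets $v$ work, and it makes the symmetry in $a,b$ manifest rather than requiring the paper's ``the other case is similar'' remark. The paper's approach, by contrast, is entirely self-contained and avoids the need to argue that $H$ is a subgroup (your acknowledged ``main obstacle''); you could streamline that step by defining the homomorphism $A\to\mathbb{Z}_5$ directly and checking it on the three defining relations of $A$, rather than verifying closure element-by-element with iterated overflow reductions. One small presentational point: your neighborhood formulas implicitly use $N(x)=xS$, which matches how the paper actually computes throughout its proofs, so you are consistent with the paper's working convention.
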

\begin{proof} Since the proof of the case $(a,b)=(2+\sigma(v),v-\sigma(v)-1)$ and the proof of the case $(a,b)=(v-\sigma(v)-1,2+\sigma(v))$ are similar, we may assume $(a,b)=(2+\sigma(v),v-\sigma(v)-1)$.
	
Since $A=\langle s_0,s_1,s_2\rangle$, we have $|D|=2nml/5=2|A|/5=|G|/5$. To show that $D$ is a perfect code in $\Gamma$, since $\Gamma$ has valency $4$, it suffices to prove that every vertex of $\Gamma$ is dominated by a vertex in $D$. Let $x\in V\Gamma$. Then $x=s_0^{p}s_1^{j'}s_2^{k'}t$ or $s_0^{p}s_1^{j'}s_2^{k'}$ with $0\leq p\leq n-1,0\leq j'\leq m-1$ and $0\leq k'\leq l-1$. It follows that there exists $r\in \{0,1,2,3,4\}$ such that $p=5\alpha'+aj'+bk'+r$ for some $\alpha'\in\{0,1,\ldots,n/5-1\}$.
	
\textbf{Case 1.} $x=s_0^{p}s_1^{j'}s_2^{k'}t$.
	
Note that $r\in\{0,1,2,3,4\}=\{0,v-1,v,v+a-5(\lceil v/2\rceil-1),v+b-5(\lfloor v/2\rfloor-1)\}$. If $r=0$, then $x=s_0^{5\alpha'+aj'+bk'}s_1^{j'}s_2^{k'}t\in D$. If $r\in\{v-1,v\}$, then $x=s_0^{5\alpha'+aj'+bk'+(v-1)}s_1^{j'}s_2^{k'}t$ or $s_0^{5\alpha'+aj'+bk'+v}s_1^{j'}s_2^{k'}t$, which implies that $x$ is dominated by $s_0^{5\alpha'+aj'+bk'+v}s_1^{j'}s_2^{k'}\in D$.

Suppose $r=v+a-5(\lceil v/2\rceil-1)$. Then $x=s_0^{5(\alpha'-\lceil v/2\rceil+1)+a(j'+1)+bk'+v}s_1^{j'}s_2^{k'}t$. If $0\leq j'\leq m-2$, then $x$ is dominated by $x\cdot ts_1=s_0^{5(\alpha'-\lceil v/2\rceil+1)+a(j'+1)+bk'+v}s_1^{j'+1}s_2^{k'}\in D.$ We only need to consider the case $j'=m-1$. Since $s_1^{m}=s_0^{5\alpha_1-am}$, $x$ is dominated by $$x\cdot ts_1=s_0^{5(\alpha'-\lceil v/2\rceil+1)+am+bk'+v}s_1^{m}s_2^{k'}=s_0^{5(\alpha'-\lceil v/2\rceil+1+\alpha_1)+bk'+v}s_2^{k'}\in D.$$
	
Suppose $r=v+b-5(\lfloor v/2\rfloor-1)$. Then $x=s_0^{5(\alpha'-\lfloor v/2\rfloor+1)+aj'+b(k'+1)+v}s_1^{j'}s_2^{k'}t$. If $0\leq k'\leq l-2$, then $x$ is dominated by $x\cdot ts_2=s_0^{5(\alpha'-\lfloor v/2\rfloor+1)+aj'+b(k'+1)+v}s_1^{j'}s_2^{k'+1}\in D.$ We only need to consider the case $k'=l-1$. Since $s_2^{l}=s_0^{5\alpha_2-bl+aj}s_1^{j}$, $x$ is dominated by $$x\cdot ts_2=s_0^{5(\alpha'-\lfloor v/2\rfloor+1)+aj'+bl+v}s_1^{j'}s_2^{l}=s_0^{5(\alpha'-\lfloor v/2\rfloor+1+\alpha_2)+a(j'+j)+v}s_1^{j'+j}.$$ If $j'+j\leq m-1$, then $x$ is dominated by $s_0^{5(\alpha'-\lfloor v/2\rfloor+1+\alpha_2)+a(j'+j)+v}s_1^{j'+j}\in D$. If $j'+j\geq m$, then $x$ is dominated by $$s_0^{5(\alpha'-\lfloor v/2\rfloor+1+\alpha_2)+a(j'+j)+v}s_1^{j'+j}=s_0^{5(\alpha'-\lfloor v/2\rfloor+1+\alpha_1+\alpha_2)+a(j'+j-m)+v}s_1^{j'+j-m}\in D$$ since $s_1^{m}=s_0^{5\alpha_1-am}$.

\textbf{Case 2.} $x=s_0^{p}s_1^{j'}s_2^{k'}$.
	
Note that $r\in \{0,1,5-a,5-b,v\}$. If $r=v$, then $x=s_0^{5\alpha'+aj'+bk'+v}s_1^{j'}s_2^{k'}\in D$. If $r\in\{0,1\}$, then $x=s_0^{5\alpha'+aj'+bk'}s_1^{j'}s_2^{k'}$ or $s_0^{5\alpha'+aj'+bk'+1}s_1^{j'}s_2^{k'}$, which implies that $x$ is dominated by $s_0^{5\alpha'+aj'+bk'}s_1^{j'}s_2^{k'}t\in D$.
	
Suppose $r=5-a$. Then $x=s_0^{5(\alpha'+1)+a(j'-1)+bk'}s_1^{j'}s_2^{k'}$. If $1\leq j'\leq m-1$, then $x$ is dominated by $x\cdot ts_1=s_0^{5(\alpha'+1)+a(j'-1)+bk'}s_1^{j'-1}s_2^{k'}t\in D.$ We only need to consider the case $j'=0$. Since $s_1^{m}=s_0^{5\alpha_1-am}$, $x$ is dominated by $$x\cdot ts_1=s_0^{5(\alpha'+1)-a+bk'}s_1^{-1}s_2^{k'}t=s_0^{5(\alpha'-\alpha_1+1)+a(m-1)+bk'}s_1^{m-1}s_2^{k'}t\in D.$$
	
Suppose $r=5-b$. Then $x=s_0^{5(\alpha'+1)+aj'+b(k'-1)}s_1^{j'}s_2^{k'}$. If $1\leq k'\leq l-1$, then $x$ is dominated by $x\cdot ts_2=s_0^{5(\alpha'+1)+aj'+b(k'-1)}s_1^{j'}s_2^{k'-1}t\in D.$ We only need to consider the case $k'=0$. Since $s_2^{l}=s_0^{5\alpha_2-bl+aj}s_1^{j}$, $x$ is dominated by $$x\cdot ts_2=s_0^{5(\alpha'+1)+aj'-b}s_1^{j'}s_2^{-1}t=s_0^{5(\alpha'-\alpha_2+1)+a(j'-j)+b(l-1)}s_1^{j'-j}s_2^{l-1}t.$$ If $j'-j\geq 0$, then $x$ is dominated by $s_0^{5(\alpha'-\alpha_2+1)+a(j'-j)+b(l-1)}s_1^{j'-j}s_2^{l-1}t\in D$. If $j'-j\leq -1$, then $x$ is dominated by $$s_0^{5(\alpha'-\alpha_2+1)+a(j'-j)+b(l-1)}s_1^{j'-j}s_2^{l-1}t=s_0^{5(\alpha'-\alpha_1-\alpha_2+1)+a(j'-j+m)+b(l-1)}s_1^{j'-j+m}s_2^{l-1}t\in D$$ since $s_1^{m}=s_0^{5\alpha_1-am}$.
\end{proof}

In the remainder of the section, we may assume that $\Gamma$ admits a perfect code $D$.

\begin{lemma}\label{ijk} Let $0\leq i\leq n-1,0\leq j\leq m-1$ and $0\leq k\leq l-1$. The following hold:
\begin{enumerate}
\item\label{ijk-1} $\left(\{s_0^{i+i'}s_1^{j}s_2^kt\mid0\leq i'\leq3\}\cup\{s_0^{i+j'}s_1^js_2^k\mid1\leq j'\leq 3\}\right)\cap D\neq\emptyset$;
\item\label{ijk-2} $\left(\{s_0^{i+i''}s_1^js_2^k\mid0\leq i''\leq 3\}\cup\{s_0^{i+j''}s_1^js_2^kt\mid0\leq j''\leq 2\}\right)\cap D\neq\emptyset$.
\end{enumerate}	
\end{lemma}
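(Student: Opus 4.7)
Proof plan. The sets in (i) and (ii) are translates of each other along the $2n$-cycle swept out inside the coset $s_1^js_2^k$ by the two generators $t$ and $ts_0$; the first window starts at a $t$-type vertex and the second at a non-$t$-type vertex, and part (ii) then follows from part (i) by shifting the window by one cycle step. I concentrate on (i) and argue by contradiction: assume the seven vertices listed there, which I label $w_1,w_2,\ldots,w_7$ in cycle order, all lie outside $D$.

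The five interior vertices $w_2,\ldots,w_6$ then have both of their cycle-neighbours outside $D$, so each must be dominated by one of its two external neighbours, which lie in one of the four adjacent strips $s_1^{j\pm1}s_2^k$ or $s_1^js_2^{k\pm1}$. The key structural fact I plan to exploit is that $D$ is independent, so no two code vertices of any single strip-cycle can sit at cycle-distance $2$; otherwise they would share a common middle neighbour and doubly-dominate it. Applied to the three non-$t$-type targets $w_2,w_4,w_6$, whose candidate external dominators occupy three cycle positions spaced by $2$ in the strips $s_1^{j-1}s_2^k$ and $s_1^js_2^{k-1}$, this rules out dominating all three from a single adjacent strip and forces a $2$:$1$ split with the middle position being the isolated one; the same argument applied to the two $t$-type targets $w_3,w_5$ forces their dominators into different strips among $s_1^{j+1}s_2^k$ and $s_1^js_2^{k+1}$. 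Altogether this leaves four symmetric configurations.

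In each configuration I would pick an auxiliary vertex $u$ in a neighbouring strip whose two cycle-neighbours have both been forced out of $D$ (one because it is uniquely dominated by an already-identified code vertex, the other because placing it in $D$ would violate the cycle-distance-$2$ rule). Then $u$ itself must be externally dominated; but one of its two external neighbours already lies in the original non-$D$ window $\{w_1,\ldots,w_7\}$, so the other is forced into $D$. That new code vertex will sit at cycle-distance $2$ from one of the previously placed code vertices in some strip, contradicting the independence rule and completing the proof.

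The main obstacle is simply the bookkeeping across the four configurations and several neighbouring strips: each individual chase is short, but to avoid repetition the argument should be organised around the obvious symmetries $s_1^{j-1}s_2^k\leftrightarrow s_1^js_2^{k-1}$ and $s_1^{j+1}s_2^k\leftrightarrow s_1^js_2^{k+1}$ to collapse the four cases to essentially one.
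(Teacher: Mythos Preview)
Your plan follows essentially the same route as the paper: assume the seven-vertex window misses $D$, push the domination of the interior vertices $w_2,\ldots,w_6$ onto the four adjacent strips, reduce the resulting configurations by the $s_1\leftrightarrow s_2$ symmetry, and finish with an auxiliary-vertex chase. Your upfront derivation of the $2{:}1$ split for $w_2,w_4,w_6$ is a tidier organization than the paper's, which first handles $w_2,w_4$ and only afterwards deduces the dominator of $w_6$; but the configurations you arrive at are the same.

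The final step, however, does not close as you describe. Normalize to $i=j=k=0$ and (after symmetry) to $s_0s_1^{-1}t,\,s_0^3s_1^{-1}t,\,s_0^2s_2^{-1}t\in D$ below and $s_0^is_1,\,s_0^{3-i}s_2\in D$ above for some $i\in\{1,2\}$. The only candidate $u$ matching your description is $u=s_0^2s_1^{-1}t$ (both cycle-neighbours $s_0^2s_1^{-1}$ and $s_0^3s_1^{-1}$ are dominated by the already-placed $s_0s_1^{-1}t$ and $s_0^3s_1^{-1}t$; note that neither is excluded by a ``cycle-distance-$2$'' argument as such). The external neighbour of $u$ you would then force into $D$ is $s_0^2s_1^{-1}s_2$, which sits in the $s_1^{-1}s_2$-strip where \emph{no} code vertex has yet been placed, so your stated contradiction ``cycle-distance $2$ from a previously placed code vertex in some strip'' does not fire. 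What does fire is an \emph{across-strip} clash: $s_0^2s_1^{-1}s_2$ and the already-placed $s_0^{3-i}s_2$ share the common neighbour $s_0^{3-i}s_1^{-1}s_2t$. Equivalently --- and this is exactly how the paper closes the argument --- one first uses $s_0^{3-i}s_2\in D$ to rule out $s_0^2s_1^{-1}s_2\in D$, and then $u$ has empty closed neighbourhood in $D$. One smaller correction: (ii) does not follow from (i) by a bare one-step shift along the cycle, since that shift is not a graph automorphism (it reverses the up/down direction of the external neighbours); the clean reduction uses the left-translation automorphism $g\mapsto tg$, which carries the window in (ii) to a window of type (i) for the perfect code $tD$.
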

\begin{proof}
\ref{ijk-1} Since $\Gamma$ is vertex transitive, we may assume $i=j=k=0$. Assume the contrary, namely, 
\begin{align}\label{i'j'-1}
\left(\{s_0^{i'}t\mid0\leq i'\leq3\}\cup\{s_0^{j'}\mid1\leq j'\leq 3\}\right)\cap D=\emptyset.
\end{align}
The fact $N(s_0t)=\{s_0,s_0^{2},s_0s_1,s_0s_2\}$ implies $s_0s_1$ or $s_0s_2\in D$. Since $N(s_0^{2}t)=\{s_0^{2},s_0^{3},s_0^{2}s_1,s_0^{2}s_2\}$, we have $s_0^{2}s_1$ or $s_0^{2}s_2\in D$. Since $s_0s_1,s_0^{2}s_1\in N(s_0s_1t)$ and $s_0s_2,s_0^{2}s_2\in N(s_0s_2t)$, one has $s_0s_1,s_0^{2}s_2\in D$ or $s_0s_2,s_0^{2}s_1\in D$.

Since $N(s_0)=\{s_0t,t,s_0s_1^{-1}t,s_0s_2^{-1}t\}$, by \eqref{i'j'-1}, we get $s_0s_1^{-1}t$ or $s_0s_2^{-1}t\in D$. Since $N(s_0^{2})=\{s_0^{2}t,s_0t,s_0^{2}s_1^{-1}t,s_0^{2}s_2^{-1}t\}$, one gets $s_0^{2}s_1^{-1}t$ or $s_0^{2}s_2^{-1}t\in D$. Observe that $s_0s_1^{-1}t,s_0^{2}s_1^{-1}t\in N(s_0^{2}s_1^{-1})$ and $s_0s_2^{-1}t,s_0^{2}s_2^{-1}t\in N(s_0^{2}s_2^{-1})$. Then $s_0s_1^{-1}t,s_0^{2}s_2^{-1}t\in D$ or $s_0s_2^{-1}t,s_0^{2}s_1^{-1}t\in D$. Without loss of generality, we may assume $s_0s_1^{-1}t,s_0^{2}s_2^{-1}t\in D$.

Since $s_0^{2}s_2^{-1}t,s_0^{3}s_2^{-1}t\in N(s_0^{3}s_2^{-1})$, one obtains that $s_0^{3}s_2^{-1}t\notin D$. Since $N(s_0^3)=\{s_0^3t,s_0^2t,s_0^3s_1^{-1}t,s_0^3s_2^{-1}t\}$, from \eqref{i'j'-1}, we have $s_0^3s_1^{-1}t\in D$. It follows that $s_0^{3}s_1^{-1}\notin D$. The fact $s_0s_1^{-1}t,s_0^{2}s_1^{-1}t\in N(s_0^{2}s_1^{-1})$ implies $s_0^{2}s_1^{-1}t,s_0^{2}s_1^{-1}\notin D$. Note that $s_0^is_1,s_0^js_2\in D$, where $\{i,j\}=\{1,2\}$. Since $s_0^js_2,s_0^2s_1^{-1}s_2\in N(s_0^js_1^{-1}s_2t)$, one obtains $s_0^2s_1^{-1}s_2\notin D$, which implies $D\cap (N(s_0^{2}s_1^{-1}t)\cup \{s_0^{2}s_1^{-1}t\})=D\cap \{s_0^{2}s_1^{-1}t,s_0^{2}s_1^{-1},s_0^{3}s_1^{-1},s_0^{2},s_0^{2}s_1^{-1}s_2\}=\emptyset$ from \eqref{i'j'-1}, a contradiction.

\ref{ijk-2} The proof is similar, hence omitted.
\end{proof}
%\begin{lemma}\label{ijk}

\begin{lemma}\label{u=5} If $x\in D$, then $s_0^{v}x\notin D$ for $-4\leq v\leq 4$ and $v\neq0$.
\end{lemma}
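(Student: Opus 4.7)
The plan is to proceed by induction on $v\in\{1,2,3,4\}$, invoking the perfect-code property, the vertex-transitivity of $\Gamma$, and Lemma~\ref{ijk}. Because left multiplication is a graph automorphism, we may assume $x=e\in D$; and since the conclusion at a negative $v$ applied to the element $s_0^vx\in D$ reduces to the corresponding positive statement, we need only rule out $s_0^v\in D$ for $v\in\{1,2,3,4\}$.

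The base case $v=1$ is immediate: since $t=e\cdot t\in N(e)$ and $t=s_0\cdot ts_0=s_0\cdot s_0^{-1}t\in N(s_0)$, if $e,s_0\in D$ then $t$ would admit two distinct dominators in $D$, contradicting the perfect-code property.

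For $v\in\{2,3,4\}$ we assume $e,s_0^v\in D$ and derive a contradiction. Writing
$$N(e)=\{t,s_0^{-1}t,s_1^{-1}t,s_2^{-1}t\},\qquad N(s_0^v)=\{s_0^vt,s_0^{v-1}t,s_0^vs_1^{-1}t,s_0^vs_2^{-1}t\},$$
and using the inductive hypothesis at $e$ and at $s_0^v$ to conclude $s_0^i\notin D$ for every $1\le i\le v-1$, each such intermediate vertex must be dominated by some member of $N(s_0^i)=\{s_0^it,s_0^{i-1}t,s_0^is_1^{-1}t,s_0^is_2^{-1}t\}$ that lies in $D$. The candidate $s_0^{i-1}t$ lies in $N(e)$ precisely when $i=1$, and $s_0^it$ lies in $N(s_0^v)$ precisely when $i=v-1$, so those two candidates are automatically excluded from $D$ at the boundary. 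Exploiting the $s_1\leftrightarrow s_2$ automorphism of $S$ to collapse mirror-symmetric subcases, we fix a representative dominator (for instance $s_0s_1^{-1}t\in D$ in the $v=2$ analysis) and work within the resulting finite enumeration.

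In each subcase we extract the neighborhoods of the forced $D$-elements to generate further exclusions, and we apply Lemma~\ref{ijk} to appropriately shifted triples $(i,j,k)$, which guarantees at least one $D$-element in every one of the seven-vertex windows listed there. Iterating this procedure either forces two $D$-elements to be adjacent in $\Gamma$, violating the independence of $D$, or exhausts all seven candidates in some Lemma~\ref{ijk} window, contradicting that lemma. The main obstacle is the combinatorial branching for $v=3,4$: the deep interior vertices $s_0^i$ with $2\le i\le v-2$ retain up to four candidate dominators after the boundary exclusions, so these subcases require careful repeated bookkeeping of Lemma~\ref{ijk} windows shifted by $s_1^{\pm1}$ and $s_2^{\pm1}$ before a contradiction emerges.
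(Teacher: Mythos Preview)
Your plan is sound and follows essentially the same route as the paper: the paper also normalizes via vertex-transitivity (to $x=t$ rather than $x=e$, a cosmetic difference), disposes of $v=1$ by a common-neighbor argument, and then treats $v\in\{2,3,4\}$ case by case through exactly the neighborhood-chasing and Lemma~\ref{ijk} windows you describe, with later cases freely using that $s_0^it\notin D$ for smaller $i$ just as your inductive hypothesis does. Be aware, though, that what you defer as ``careful repeated bookkeeping'' is the entire substance of the lemma---the paper devotes roughly a page of explicit forced-inclusion/exclusion chains to each of $v=2,3,4$, and no structural shortcut replaces that work.
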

\begin{proof} Since $\Gamma$ is vertex transitive, we may assume $x=t$. Assume the contrary, namely, there exists $v\in\{\pm1,\pm2,\pm3,\pm4\}$ such that $s_0^vt\in D$. Since the proof is similar, we only consider $1\leq v\leq 4$. Since $t,s_0t\in N(s_0)$, we obtain $s_0t\notin D$. It follows that $v\neq 1$.
	
\textbf{Case 1.} $v=2$.

Note that $s_0^2t\in D$. Since $s_0^2t,s_0t\in N(s_0^2)$ and $s_0\in N(t)$, one gets $s_0t,s_0^2,s_0\notin D$. The fact $N(s_0t)=\{s_0,s_0^2,s_0s_1,s_0s_2\}$ implies $s_0s_1$ or $s_0s_2\in D$. Without loss of generality, we may assume $s_0s_1\in D$. It follows that $s_0s_2\notin D$. Since $t,s_0^{-1}s_2t,s_2t\in N(s_2)$ and $s_0^2t,s_0s_2t,s_0^2s_2t\in N(s_0^2s_2)$, we have $s_0^{-1}s_2t,s_2,s_2t,s_0s_2t,s_0^2s_2,s_0^2s_2t\notin D$. Then $D\cap\{s_0^{-1}s_2t,s_2,s_2t,s_0s_2,s_0s_2t,s_0^2s_2,s_0^2s_2t\}=\emptyset$, contrary to Lemma \ref{ijk} \ref{ijk-1}.
	
\textbf{Case 2.} $v=3$.

Note that $s_0^3t\in D$. Since $t,s_0t\in N(s_0)$ and $s_0^3t,s_0^2t\in N(s_0^3)$, we get $s_0t,s_0,s_0^2t,s_0^3\notin D$. The fact $N(s_0t)=\{s_0,s_0^2,s_0s_1,s_0s_2\}$ implies $s_0^2,s_0s_1$ or $s_0s_2\in D$.
	
Suppose $s_0s_1,s_0s_2\notin D$. Then $s_0^2\in D$. The fact $s_0^2,s_0^2s_1,s_0^2s_2\in N(s_0^2t)$ implies $s_0^2s_1,s_0^2s_2\notin D$. Since $s_0^3t,s_0^2s_2t\in N(s_0^3s_2)$ and $s_0^3t,s_0^2s_1t\in N(s_0^3s_1)$, we get $s_0^2s_2t,s_0^2s_1t\notin D$. Since $t,s_0^{-1}s_2t,s_2t\in N(s_2)$ and $t,s_0^{-1}s_1t,s_1t\in N(s_1)$, we have $s_0^{-1}s_2t,s_2,s_2t,s_0^{-1}s_1t,s_1,s_1t\notin D$. By setting $(i,j,k)=(-1,0,1)$ and $(i,j,k)=(-1,1,0)$ in Lemma \ref{ijk} \ref{ijk-1}, we get $D\cap\{s_0^{-1}s_2t,s_2,s_2t,s_0s_2,s_0s_2t,s_0^2s_2,s_0^2s_2t\}\neq\emptyset$ and $D\cap\{s_0^{-1}s_1t,s_1,s_1t,s_0s_1,s_0s_1t,s_0^2s_1,s_0^2s_1t\}\neq\emptyset$. It follows that $s_0s_2t,s_0s_1t\in D$, and so $s_0s_2t,s_0s_1t\in D\cap N(s_0s_1s_2)$, a contradiction. Thus, $s_0s_1$ or $s_0s_2\in D$.
	
Without loss of generality, we may assume $s_0s_2\in D$. Observe that $s_0^2,s_0s_1\notin D$. Since $N(s_0s_2t)=\{s_0s_2,s_0^2s_2,s_0s_1s_2,s_0s_2^2\}$, we get $s_0s_2t,s_0^2s_2,s_0s_1s_2,s_0s_2^2\notin D$. Since $s_0s_2,s_1s_2\in N(s_2t)$, we have $s_1s_2\notin D$. The fact $t,s_1t\in N(s_1)$ implies $s_1t,s_1\notin D$. Note that $s_0^2t,s_0^3\notin D$. Since $N(s_1t)=\{s_1,s_0s_1,s_1^2,s_1s_2\}$ and $N(s_0^2t)=\{s_0^2,s_0^3,s_0^2s_1,s_0^2s_2\}$, one gets $s_1^2,s_0^2s_1\in D$. Since $s_1^2,s_1^2s_2\in N(s_1^2t)$ and $s_0^2s_1,s_0^2s_1s_2\in N(s_0^2s_1t)$, we obtain that $s_1^2s_2,s_0^2s_1s_2\notin D$. Since $s_0^3t\in D$ and $s_0^3t,s_0^2s_2t\in N(s_0^3s_2)$, we have $s_0^2s_2t,s_0^3s_2\notin D$. Since $N(s_0^2s_2t)=\{s_0^2s_2,s_0^3s_2,s_0^2s_1s_2,s_0^2s_2^2\}$ and $N(s_0^2s_2)=\{s_0^2s_2t,s_0s_2t,s_0^2s_1^{-1}s_2t,s_0^2t\}$, one obtains $s_0^2s_2^2,s_0^2s_1^{-1}s_2t\in D$. Since $s_0^2s_2^2\in N(s_0s_2^2t)$ and $s_0^2s_1^{-1}s_2t,s_0s_1^{-1}s_2^2t\in N(s_0^2s_1^{-1}s_2^2)$, one gets $s_0s_2^2t,s_0s_1^{-1}s_2^2t\notin D$. The fact $N(s_0s_2^2)=\{s_0s_2^2t,s_2^2t,s_0s_1^{-1}s_2^2t,s_0s_2t\}$ implies $s_2^2t\in D$. Since $s_2^2t,s_1s_2t\in N(s_1s_2^2)$, one obtains $s_1s_2t,s_1s_2^2\notin D$. Then $D\cap (N(s_1s_2t)\cup \{s_1s_2t\})=D\cap \{s_1s_2t,s_1s_2,s_0s_1s_2,s_1^2s_2,s_1s_2^2\}=\emptyset$, a contradiction.
		
\textbf{Case 3.} $v=4$.

Note that $s_0^4t\in D$ and $s_0^2t,s_0^3t\notin D$. Since $t,s_0t\in N(s_0)$ and $s_0^4t\in N(s_0^4)$, we get $s_0t,s_0,s_0^4\notin D$. By setting $(i,j,k)=(1,0,0)$ in Lemma \ref{ijk} \ref{ijk-2}, one has $D\cap\{s_0,s_0t,s_0^2,s_0^2t,s_0^3,s_0^3t,s_0^4\}\neq\emptyset$, and so $s_0^2$ or $s_0^3\in D$.
	
Suppose $s_0^2\in D$. Since $s_0^2,s_0^3\in N(s_0^2t)$, we have $s_0^3\notin D$. Since $N(s_0^3t)=\{s_0^3,s_0^4,s_0^3s_1,s_0^3s_2\}$, one gets $s_0^3s_1$ or $s_0^3s_2\in D$. Without loss of generality, we may assume $s_0^3s_2\in D$. Since $s_0^3s_2,s_0^3s_1^{-1}s_2,s_0^4s_1^{-1}s_2\in N(s_0^3s_1^{-1}s_2t)$ and $s_0^3s_2,s_0^2s_2\in N(s_0^2s_2t)$, one obtains $s_0^3s_1^{-1}s_2,s_0^3s_1^{-1}s_2t,s_0^4s_1^{-1}s_2,s_0^2s_2,s_0^2s_2t\notin D$. Since $t,s_0^{-1}s_2t,s_2t\in N(s_2)$, $s_0^2,s_0s_2\in N(s_0t)$ and $s_0^2,s_0^2s_1^{-1}s_2\in N(s_0^2s_1^{-1}t)$, we obtain $s_0^{-1}s_2t,s_2,s_2t,s_0s_2,s_0^2s_1^{-1}s_2\notin D$. By setting $(i,j,k)=(-1,0,1)$ in Lemma \ref{ijk} \ref{ijk-1}, one has $$D\cap\{s_0^{-1}s_2t,s_2,s_2t,s_0s_2,s_0s_2t,s_0^2s_2,s_0^2s_2t\}\neq\emptyset,$$ and so $s_0s_2t\in D$. Since $s_0s_2t,s_0^2s_1^{-1}s_2t\in N(s_0^2s_2)$ and $s_0s_2t,s_0s_1^{-1}s_2t\in N(s_0s_2)$, we get $s_0^2s_1^{-1}s_2t,s_0s_1^{-1}s_2t\notin D$. Since $s_0^4t,s_0^4s_1^{-1}s_2t\in N(s_0^4s_2)$, we get $s_0^4s_1^{-1}s_2t\notin D$, which implies $D\cap\{s_0s_1^{-1}s_2t,s_0^2s_1^{-1}s_2,s_0^2s_1^{-1}s_2t,s_0^3s_1^{-1}s_2,s_0^3s_1^{-1}s_2t,s_0^4s_1^{-1}s_2,s_0^4s_1^{-1}s_2t\}=\emptyset$, contrary to Lemma \ref{ijk} \ref{ijk-1}.

Suppose $s_0^2\notin D$. Then $s_0^3\in D$. Note that $s_0,s_0t\notin D$. The fact $N(s_0t)=\{s_0,s_0^2,s_0s_1,s_0s_2\}$ implies $s_0s_1$ or $s_0s_2\in D$. Without loss of generality, we may assume $s_0s_2\in D$. Since $s_0s_2,s_0^2s_2\in N(s_0s_2t)$ and $s_0s_2,s_0s_1^{-1}s_2,s_0^2s_1^{-1}s_2\in N(s_0s_1^{-1}s_2t)$, one obtains $s_0s_2t,s_0^2s_2,s_0s_1^{-1}s_2,s_0s_1^{-1}s_2t,s_0^2s_1^{-1}s_2\notin D$. Observe that $s_0^4t\in D$. Since $s_0^3,s_0^3s_2\in N(s_0^3t)$ and $s_0^4t,s_0^3s_2t,s_0^4s_2t\in N(s_0^4s_2)$, one gets $s_0^3s_2,s_0^3s_2t,s_0^4s_2,s_0^4s_2t\notin D$. By setting $(i,j,k)=(1,0,1)$ in Lemma \ref{ijk} \ref{ijk-1}, we get $$D\cap\{s_0s_2t,s_0^2s_2,s_0^2s_2t,s_0^3s_2,s_0^3s_2t,s_0^4s_2,s_0^4s_2t\}\neq\emptyset,$$ and so $s_0^2s_2t\in D$. Since $s_0^2s_2t,s_0^3s_1^{-1}s_2t\in N(s_0^3s_2)$ and $s_0^2s_2t,s_0^2s_1^{-1}s_2t\in N(s_0^2s_2)$, we have $s_0^3s_1^{-1}s_2t,s_0^2s_1^{-1}s_2t\notin D$. Since $s_0^3,s_0^3s_1^{-1}s_2\in N(s_0^3s_1^{-1}t)$ and $t,s_1^{-1}s_2t\in N(s_2)$, one obtains $s_0^3s_1^{-1}s_2,s_1^{-1}s_2t\notin D$, which implies $$D\cap\{s_1^{-1}s_2t,s_0s_1^{-1}s_2,s_0s_1^{-1}s_2t,s_0^2s_1^{-1}s_2,s_0^2s_1^{-1}s_2t,s_0^3s_1^{-1}s_2,s_0^3s_1^{-1}s_2t\}=\emptyset,$$ contrary to Lemma \ref{ijk} \ref{ijk-1}.
\end{proof}
%\begin{lemma}\label{u=5}

\begin{lemma}\label{v=234} If $x\in D\cap At$, then exactly one of $xts_0^2$, $xts_0^3$ and $xts_0^4$ belongs to $D$.
\end{lemma}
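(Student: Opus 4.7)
The plan is to reduce to the case $x = t$ via the left-regular action of $G$, and then deduce existence and uniqueness separately from the two preceding lemmas. Left translation by $tx^{-1} \in G$ is an automorphism of $\Gamma = \mathrm{Cay}(G,S)$ that sends $x$ to $t$ and, using only $t^2 = e$, sends $xts_0^i$ to $(tx^{-1})\cdot xts_0^i = s_0^i$ for each $i \in \{2,3,4\}$. Hence $(tx^{-1})D$ is a perfect code containing $t$, and it suffices to show that whenever $t \in D$, exactly one of $s_0^2$, $s_0^3$, $s_0^4$ belongs to $D$.

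For the uniqueness half, suppose $s_0^i, s_0^j \in D$ with $2 \leq i < j \leq 4$. Then $j - i \in \{1, 2\}$, and Lemma~\ref{u=5} applied with $x = s_0^i \in D$ and exponent $v = j-i$ gives $s_0^j = s_0^{j-i} \cdot s_0^i \notin D$, a contradiction. So at most one of the three powers is in $D$.

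For the existence half, I would invoke Lemma~\ref{ijk}\ref{ijk-2} with $(i,j,k) = (1,0,0)$, which forces
\[
D \cap \bigl(\{s_0, s_0^2, s_0^3, s_0^4\} \cup \{s_0 t, s_0^2 t, s_0^3 t\}\bigr) \ne \emptyset.
\]
Since $s_0 \in N(t)$ is already dominated by $t \in D$, we have $s_0 \notin D$; and Lemma~\ref{u=5} applied to $x = t$ with $v \in \{1,2,3\}$ simultaneously eliminates $s_0 t$, $s_0^2 t$, $s_0^3 t$. The only remaining candidates lie in $\{s_0^2, s_0^3, s_0^4\}$, so at least one of them belongs to $D$, completing the proof.

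The argument is a short combinatorial deduction from Lemmas~\ref{u=5} and \ref{ijk}, so I do not anticipate a substantive obstacle. The only delicate step is the reduction to $x = t$: one should confirm that the cancellation $tx^{-1}\cdot xts_0^i = s_0^i$ holds regardless of which element $a \in A$ satisfies $x = at$, which works precisely because the relation $tat = a^{-1}$ is never used in this simplification.
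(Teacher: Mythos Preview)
Your proof is correct, and for the existence half it follows the paper's argument almost verbatim (the paper happens to invoke Lemma~\ref{ijk}\ref{ijk-1} rather than~\ref{ijk-2}, but the effect is identical after eliminating the elements of the form $s_0^i t$ via Lemma~\ref{u=5}). The reduction to $x=t$ by the regular action is also what the paper does.

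Where your argument genuinely diverges is in the uniqueness half. The paper first rules out two \emph{consecutive} powers $s_0^i,s_0^{i+1}\in D$ by noting both lie in $N(s_0^it)$, and is then left with the residual case $s_0^2,s_0^4\in D$, which it disposes of by a page-long neighbour chase through elements like $s_0^3s_1^{-1}t$, $s_0^3s_2^{-1}t$, etc., eventually forcing two neighbours of $s_0^3$ into $D$. You bypass all of this: since Lemma~\ref{u=5} is stated for arbitrary $x\in D$ (not only for $x\in At$), applying it with $x=s_0^i$ and $v=j-i\in\{1,2\}$ kills $s_0^j$ immediately, covering the $\{s_0^2,s_0^4\}$ case just as easily as the consecutive ones. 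This is a real simplification that the paper's authors seem to have overlooked, even though they invoke Lemma~\ref{u=5} elsewhere in the same proof.
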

\begin{proof} Since $\Gamma$ is vertex transitive, we may assume $x=t$. By Lemma \ref{u=5}, we obtain $s_0t,s_0^2t,s_0^3t,s_0^4t\notin D$.  Since $s_0\in N(t)$, one gets $s_0\notin D$. By setting $(i,j,k)=(1,0,0)$ in Lemma \ref{ijk} \ref{ijk-1}, we get $D\cap\{s_0t,s_0^2,s_0^2t,s_0^3,s_0^3t,s_0^4,s_0^4t\}\neq \emptyset$. It follows that $\{s_0^2,s_0^3,s_0^4\}\cap D\neq\emptyset$.

Assume the contrary, namely, $|\{s_0^2,s_0^3,s_0^4\}\cap D|>1$. If $s_0^{i},s_0^{i+1}\in D$ for some $i\in\{2,3\}$, then $s_0^i,s_0^{i+1}\in N(s_0^it)\cap D$, a contradiction. It follows that $s_0^{2},s_0^{4}\in D$.

Since $s_0^2,s_0^3\in N(s_0^2t)$, we have $s_0^3\notin D$. Since $s_0^2,s_0^2s_1^{-1},s_0^3s_1^{-1}\in N(s_0^2s_1^{-1}t)$, we get $s_0^2s_1^{-1},s_0^2s_1^{-1}t,s_0^3s_1^{-1}\notin D$. Since $s_0^2,s_0^2s_2^{-1},s_0^3s_2^{-1}\in N(s_0^2s_2^{-1}t)$, we obtain $s_0^2s_2^{-1},s_0^2s_2^{-1}t,s_0^3s_2^{-1}\notin D$. Since $s_0^4,s_0^4s_1^{-1}\in N(s_0^4s_1^{-1}t)$ and $s_0^4,s_0^4s_2^{-1}\in N(s_0^4s_2^{-1}t)$, one gets $s_0^4s_1^{-1},s_0^4s_1^{-1}t,s_0^4s_2^{-1},s_0^4s_2^{-1}t\notin D$. The fact $t,s_0s_1^{-1}t,s_0s_2^{-1}t\in N(s_0)$ implies $s_0s_1^{-1}t,s_0s_2^{-1}t\notin D$. By setting $(i,j,k)=(1,-1,0)$ and $(i,j,k)=(1,0,-1)$ in Lemma \ref{ijk} \ref{ijk-1}, we obtain $$D\cap\{s_0s_1^{-1}t,s_0^2s_1^{-1},s_0^2s_1^{-1}t,s_0^3s_1^{-1},s_0^3s_1^{-1}t,s_0^4s_1^{-1},s_0^4s_1^{-1}t\}\neq\emptyset$$ and $$D\cap\{s_0s_2^{-1}t,s_0^2s_2^{-1},s_0^2s_2^{-1}t,s_0^3s_2^{-1},s_0^3s_2^{-1}t,s_0^4s_2^{-1},s_0^4s_2^{-1}t\}\neq\emptyset,$$ which imply $s_0^3s_1^{-1}t,s_0^3s_2^{-1}t\in D$. Then $s_0^3s_1^{-1}t,s_0^3s_2^{-1}t\in D\cap N(s_0^3)$, a contradiction.
\end{proof}
%\begin{lemma}\label{v=234}

For each $j,k\geq0$, let $L_{j,k}=\{s_0^is_1^js_2^kt,s_0^is_1^js_2^k:0\leq i\leq n-1\}$ and call it the {\em $(j,k)$-th layer} of $\Gamma$. 

\begin{lemma}\label{uv} Let $j,k\geq0$. If $x\in L_{j,k}\cap At\cap D$, then $5\mid n$ and $D\cap L_{j,k}=\cup_{\alpha=0}^{n/{5}-1}\{s_0^{5\alpha}x,xts_0^{v+5\alpha}\}$ for some $v\in\{2,3,4\}$.
\end{lemma}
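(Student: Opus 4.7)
Let $a_i := s_0^i s_1^j s_2^k$ and $b_i := s_0^i s_1^j s_2^k t$, so that $L_{j,k} = \{a_i, b_i : 0 \le i \le n-1\}$ forms a $2n$-cycle whose in-layer edges are $a_i \sim b_i$ and $b_i \sim a_{i+1}$. Writing $x = b_{i_0}$, Lemma \ref{v=234} yields a unique $v \in \{2,3,4\}$ with $a_{i_0+v} \in D$, and Lemma \ref{u=5} applied to $b_{i_0}$ and to $a_{i_0+v}$ excludes $b_{i_0+w}$ and $a_{i_0+v+w}$ from $D$ for $1 \le |w| \le 4$. Moreover $a_{i_0}, a_{i_0+1}, b_{i_0+v-1}, b_{i_0+v}\notin D$ as neighbors of $b_{i_0}$ or $a_{i_0+v}$.

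I next establish an $A$-analog of Lemma \ref{v=234}: if $a_i \in D$, then exactly one of $b_{i+1}, b_{i+2}, b_{i+3}$ belongs to $D$. The argument parallels that of Lemma \ref{v=234}: Lemma \ref{ijk}(ii) at base index $i+1$ together with Lemma \ref{u=5} applied to $a_i$ forces at least one such $b$ to be in $D$, while Lemma \ref{u=5} (applied to any putative second $b$ within distance two) prevents coexistence of two. Alternately applying Lemma \ref{v=234} to each $b$-element of $D \cap L_{j,k}$ and the $A$-analog to each $a$-element, starting from $b_{i_0}$, generates a cyclic sequence
\[
b_{i_0},\, a_{i_0+v_1},\, b_{i_0+v_1+r_1},\, a_{i_0+v_1+r_1+v_2},\, b_{i_0+v_1+r_1+v_2+r_2},\, \ldots
\]
with $v_i \in \{2,3,4\}$ and $r_i \in \{1,2,3\}$. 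Lemma \ref{u=5} applied to each $b$-term (resp.\ $a$-term) yields $v_i + r_i \ge 5$ (resp.\ $r_i + v_{i+1} \ge 5$), since the next $D$-element of that type in $L_{j,k}$ must lie at $s_0$-distance at least $5$.

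Suppose the sequence closes after $k$ iterations, so $\sum_{i=1}^{k}(v_i + r_i) = n$ and $|D \cap L_{j,k}| = 2k$. Since each $v_i + r_i \ge 5$, we have $k \le n/5$, and the same argument gives $|D \cap L_{j',k'}| \le 2n/5$ for every layer $L_{j',k'}$ (trivially so when $D$ misses that layer). Summing over all $ml$ layers, $|D| \le 2nml/5 = |G|/5 = |D|$, so equality holds in every layer: $k = n/5$, each $v_i + r_i = 5$, and hence $r_i = 5 - v_i$; in particular $5 \mid n$. Substituting $r_i = 5 - v_i$ into $r_i + v_{i+1} \ge 5$ yields $v_{i+1} \ge v_i$, and cyclicity $v_{k+1} = v_1$ forces $v_i \equiv v$ throughout. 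Therefore
\[
D \cap L_{j,k} = \bigcup_{\alpha=0}^{n/5-1}\{b_{i_0+5\alpha},\, a_{i_0+v+5\alpha}\} = \bigcup_{\alpha=0}^{n/5-1}\{s_0^{5\alpha}x,\, xts_0^{v+5\alpha}\},
\]
which is the claim. The main technical hurdles are (i) proving the $A$-analog of Lemma \ref{v=234}, which requires case analysis parallel to that lemma, and (ii) combining the per-layer bound $|D\cap L_{j',k'}|\le 2n/5$ with the global identity $|D|=|G|/5$ so that the inequality chain collapses to equalities, forcing the clean period-$5$ structure. Once this is done, the monotonicity $v_{i+1}\ge v_i$ together with cyclicity pins down the constant $v$ in one line.
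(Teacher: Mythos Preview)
Your approach is sound in outline, but there is one real gap and one place where you work much harder than necessary.

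The gap is the sentence ``Suppose the sequence closes after $k$ iterations, so $\sum_{i=1}^{k}(v_i + r_i) = n$ and $|D \cap L_{j,k}| = 2k$.'' Nothing you have established forces the forward orbit of $b_{i_0}$ to exhaust $D\cap L_{j,k}$, nor the total displacement to equal $n$ rather than a proper multiple of $n$. To repair this you must argue that $F(b_i)=b_{i+v_i+r_i}$ is in fact the \emph{immediately next} $b$-element of $D$ on the cycle: since $5\le v_i+r_i\le 7$ while any two $b$-elements of $D$ are at $s_0$-distance $\ge 5$ by Lemma~\ref{u=5}, an intervening $b$-element of $D$ would lie within $s_0$-distance $\le 2$ of $F(b_i)$, a contradiction. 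Once $F$ is known to be the successor map, the orbit is all of $D\cap At\cap L_{j,k}$ and wraps exactly once, giving both assertions.

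For comparison, the paper avoids the forward sequence, the $A$-analogue of Lemma~\ref{v=234}, and the cyclic monotonicity argument entirely. It bounds $|D\cap At^{\epsilon}\cap L_{j',k'}|\le n/5$ directly from Lemma~\ref{u=5} (same-type elements are spaced $\ge 5$ apart in $\mathbb{Z}_n$), proves $|D\cap A|=|D\cap At|$ by a bipartite double-count of dominations (using $S\subseteq At$), and then lets $|D|=|G|/5$ collapse all the layer inequalities to equalities. With $D\cap At\cap L_{j,k}$ thus known to be $\{s_0^{5\alpha}x\}$, a single application of Lemma~\ref{v=234} fixes the offset $v$ for the $A$-part. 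Your route has the minor advantage that the equal split between $A$ and $At$ within each layer is automatic from the construction, but the paper's argument is substantially shorter.
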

\begin{proof} Note that, for each $(j,k)$-th layer,
$$(s_1^js_2^k,s_1^js_2^kt,s_0s_1^js_2^k,s_0s_1^js_2^kt,\ldots,s_0^{n-1}s_1^js_2^k,s_0^{n-1}s_1^js_2^kt)$$ is a cycle of length $2n$ in $\Gamma$. We refer to this cycle as $(j,k)$-cycle. Note that $x=s_0^is_1^js_2^kt\in D$ with $0\leq i\leq n-1$. By Lemma \ref{u=5}, any two vertices of $D\cap At^{\epsilon}$ belonging to the $(j,k)$-cycle are at least $10$ positions apart on this $(j,k)$-cycle for $\epsilon\in\{\pm1\}$. This shows that the intersection of $D\cap At^{\epsilon}$ with any $(j,k)$-th layer contains at most $2n/10$ vertices for $\epsilon\in\{\pm1\}$.

Since $|A|=|At|$ and $S\subseteq At$, we have $|A|=4|D\cap At|+|D\cap A|$ and $|At|=4|D\cap A|+|D\cap At|$, which imply $|D\cap A|=|D\cap At|$. Since $A=\langle s_0,s_1,s_2\rangle$, from the minimality of $m$ and $l$, we have $|D|=|D\cap A|+|D\cap At|=2|D\cap At^{\epsilon}|=2\sum_{0\leq j\leq m-1,0\leq k\leq l-1}|D\cap At^{\epsilon}\cap L_{j,k}|\leq2nml/5=|G|/5$ for $\epsilon\in\{\pm1\}$. Since $D=|G|/5$, the intersection of $D\cap At^{\epsilon}$ with each $(j,k)$-th layer contains exactly every tenth vertex of this cycle for $\epsilon\in\{\pm1\}$. Since $|L_{j,k}|=2n$, we get $5\mid n$ and $D\cap L_{j,k}=\cup_{\alpha=0}^{n/{5}-1}\{s_0^{5\alpha}x,xts_0^{v+5\alpha}\}$ for some $v\in\{2,3,4\}$ from Lemma \ref{v=234}.
\end{proof}
%\begin{lemma}\label{uv}

\begin{lemma}\label{jk}
Let $j,k\geq0$ and $\{a,b\}=\{2+\sigma(v),v-\sigma(v)-1\}$. Suppose $x\in At$ and $D\cap L_{j,k}=\cup_{\alpha'=0}^{n/{5}-1}\{s_0^{5\alpha'}x,xts_0^{v+5\alpha'}\}$ for some $v\in\{2,3,4\}$. Then $$D\cap L_{j+1,k}=\cup_{\alpha'=0}^{n/{5}-1}\{s_0^{5\alpha'}\cdot s_0^{a}s_1x,s_0^{a}s_1x\cdot ts_0^{v+5\alpha'}\},$$
$$D\cap L_{j,k+1}=\cup_{\alpha'=0}^{n/{5}-1}\{s_0^{5\alpha'}\cdot s_0^{b}s_2x,s_0^{b}s_2x\cdot ts_0^{v+5\alpha'}\}.$$
\end{lemma}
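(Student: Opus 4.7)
The plan is to first observe that both $L_{j+1,k}$ and $L_{j,k+1}$ meet $D \cap At$ nontrivially: the counting argument in the proof of Lemma \ref{uv} shows that $|D|=|G|/5$ forces each of the $ml$ layers to contain exactly $n/5$ vertices of $D$ in $A$ and $n/5$ in $At$. Applying Lemma \ref{uv} to each adjacent layer then yields $D \cap L_{j+1,k} = \bigcup_{\alpha'}\{s_0^{5\alpha'}y,\, yts_0^{v_1+5\alpha'}\}$ for some $y = s_0^{i_1}s_1^{j+1}s_2^kt \in D$ and $v_1 \in \{2,3,4\}$, and analogously $D \cap L_{j,k+1}$ involves parameters $i_2, v_2$. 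Writing $x = s_0^{i_0}s_1^js_2^kt$, what remains is to show $v_1 = v_2 = v$ and to pin down $\{i_1 - i_0,\, i_2 - i_0\} = \{a,b\} \pmod 5$ with $a$ paired to the $s_1$-direction and $b$ to the $s_2$-direction.

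To constrain the unknowns, I would exploit the perfect-code condition at two carefully chosen vertices. First, $s_0^{i_0}s_1^{j+1}s_2^k \in L_{j+1,k}$ is dominated by $x \in D$; hence neither it nor its same-layer $At$-neighbors $s_0^{i_0}s_1^{j+1}s_2^kt$ and $s_0^{i_0-1}s_1^{j+1}s_2^kt$ may lie in $D$, forcing $i_1 - i_0 \not\equiv 0,\, -1,\, -v_1 \pmod 5$. Second, the $At$-vertex $s_0^{i_1+v_1}s_1^js_2^kt \in L_{j,k}$ is dominated by the $A$-vertex $s_0^{i_1+v_1}s_1^{j+1}s_2^k \in D$, so none of its other potential dominators in $L_{j,k}$ may be in $D$, adding $i_1 - i_0 \not\equiv v-v_1,\, v-v_1-1 \pmod 5$. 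A short enumeration over $(v, v_1) \in \{2,3,4\}^2$ shows: the pairings $(2,4)$ and $(4,2)$ have no solution and are immediately ruled out; the three pairings with $v_1 = v$ each leave $i_1 - i_0 \in \{2+\sigma(v),\, v-\sigma(v)-1\} = \{a,b\}$; and each of the four mixed pairings $(2,3),(3,2),(3,4),(4,3)$ forces a single unique residue for $i_1 - i_0$.

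The heart of the argument is eliminating these four mixed pairings, and for this I plan to propagate the same one-step analysis to the pairs $(L_{j+1,k}, L_{j+2,k})$ and $(L_{j+1,k}, L_{j+1,k+1})$, then compare the two values of $i_{j+1,k+1}$ computed along the paths $L_{j,k} \to L_{j+1,k} \to L_{j+1,k+1}$ and $L_{j,k} \to L_{j,k+1} \to L_{j+1,k+1}$; under each mixed hypothesis the two computations must disagree, yielding a contradiction. The symmetric analysis for $L_{j,k+1}$ gives $v_2 = v$ and $i_2 - i_0 \in \{a,b\}$. Finally, the $At$-vertex coverage of $L_{j,k}$ yields the partition $\{0,\, v,\, v-1,\, (i_1-i_0)+v,\, (i_2-i_0)+v\} = \mathbb{Z}_5$, which forces $(i_1 - i_0)$ and $(i_2 - i_0)$ to be the two distinct elements of $\{a,b\}$, one corresponding to the $s_1$-direction and the other to the $s_2$-direction, as the lemma asserts. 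The main obstacle is the mixed-case elimination; if the two-route matching does not suffice, a more delicate argument invoking the cyclic structure of $\langle s_0 \rangle$ and the minimality of $m$ and $l$ will likely be required.
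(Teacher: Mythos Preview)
Your high-level strategy—parametrize the neighboring layers via Lemma \ref{uv} and then pin down $(v_1,i_1-i_0)$ and $(v_2,i_2-i_0)$ by domination constraints—is a genuinely different route from the paper's. The paper does not invoke Lemma \ref{uv} as a black box up front; instead it fixes $x=t$, splits into the three cases $v=2,3,4$, and in each case performs explicit neighborhood chasing (exactly in the style of Lemmas \ref{ijk}--\ref{v=234}) to locate specific elements of $D$ in $L_{1,0}$ and $L_{0,1}$, applying Lemma \ref{uv} only at the very end. Your reduction to five forbidden residues for $i_1-i_0$, together with the $At$-coverage partition, is clean and recovers the correct answer whenever $v_1=v$.

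The gap is in eliminating the mixed pairings. Your claim that the two routes $L_{j,k}\to L_{j+1,k}\to L_{j+1,k+1}$ and $L_{j,k}\to L_{j,k+1}\to L_{j+1,k+1}$ must disagree under every mixed hypothesis is false. Take $(v,v_1)=(2,3)$: your five-residue analysis forces $i_1-i_0\equiv1$, and then the $At$-coverage partition $\{0,2,1,(i_1-i_0)+v_1,(i_2-i_0)+v_2\}=\mathbb{Z}_5$ forces $(i_2-i_0)+v_2\equiv3$, whence $(v_2,i_2-i_0)=(2,1)$; in particular $i_1\equiv i_2$. Now run both routes to $L_{j+1,k+1}$. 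From $L_{j+1,k}$ (with $v_1=3$) the admissible $(v_{11},i_{11}-i_1)$ are $(2,2),(3,1),(3,3),(4,2)$; from $L_{j,k+1}$ (with $v_2=2$) the admissible $(v_{11},i_{11}-i_2)$ are $(2,1),(2,2),(3,1)$. Since $i_1\equiv i_2$, the choices $(v_{11},i_{11}-i_1)=(2,2)$ and $(v_{11},i_{11}-i_1)=(3,1)$ are each consistent with both routes, so no contradiction arises. (By contrast, the matching \emph{does} work for $(v,v_1)=(3,2)$ with $v_2=3$, so the method is not uniformly useless—just insufficient.)

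Since your own fallback (``invoking the cyclic structure of $\langle s_0\rangle$ and the minimality of $m,l$'') is not an argument but a hope, the proposal as it stands does not close the case $v_1\neq v$. One way to repair it within your framework is to add the symmetric constraints coming from the $A$-vertex coverage of $L_{j+1,k}$ (which brings in $L_{j,k}$ and $L_{j+1,k-1}$) and iterate; but at that point the bookkeeping is no lighter than the paper's direct neighborhood chase, which never confronts the mixed pairings at all because it simply exhibits the required $D$-elements in each case.
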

\begin{proof}
Since $\Gamma$ is vertex transitive, we may assume $x=t$. Then $j=k=0$.

\textbf{Case 1.} $v=2$.

Note that
\begin{align}\label{v2}
	D\cap L_{0,0}=\cup_{\alpha'=0}^{n/{5}-1}\{s_0^{5\alpha'}t,s_0^{2+5\alpha'}\}.
\end{align}
Since $N(s_0^3)=\{s_0^2t,s_0^3t,s_0^3s_1^{-1}t,s_0^3s_2^{-1}t\}$ and $N(s_0^4)=\{s_0^3t,s_0^4t,s_0^4s_1^{-1}t,s_0^4s_2^{-1}t\}$, by \eqref{v2}, we have $s_0^3s_1^{-1}t$ or $s_0^3s_2^{-1}t\in D$, and $s_0^4s_1^{-1}t$ or $s_0^4s_2^{-1}t\in D$. Since $s_0^3s_1^{-1}t,s_0^4s_1^{-1}t\in N(s_0^4s_1^{-1})$ and $s_0^3s_2^{-1}t,s_0^4s_2^{-1}t\in N(s_0^4s_2^{-1})$, we get $s_0^3s_x^{-1}t,s_0^4s_y^{-1}t\in D$ for $\{x,y\}=\{1,2\}$.

Since $N(s_0^3t)=\{s_0^3,s_0^4,s_0^3s_1,s_0^3s_2\}$ and $N(s_0^4t)=\{s_0^4,s_0^5,s_0^4s_1,s_0^4s_2\}$, from \eqref{v2}, one gets $s_0^3s_1$ or $s_0^3s_2\in D$, and $s_0^4s_1$ or $s_0^4s_2\in D$. Since $s_0^3s_1,s_0^4s_1\in N(s_0^3s_1t)$ and $s_0^3s_2,s_0^4s_2\in N(s_0^3s_2t)$, we have $s_0^3s_x,s_0^4s_y\in D$ for $\{x,y\}=\{1,2\}$.

Suppose $s_0^3s_x^{-1}t,s_0^4s_y^{-1}t,s_0^3s_x,s_0^4s_y\in D$ for $\{x,y\}=\{1,2\}$. \eqref{v2} implies $s_0^2\in D$. Since $s_0^2,s_0^3s_y^{-1}\in N(s_0^2s_y^{-1}t)$, one has $s_0^3s_y^{-1}\notin D$. Since $s_0^4s_y^{-1}t,s_0^3s_y^{-1}t\in N(s_0^4s_y^{-1})$, we get $s_0^3s_y^{-1}t,s_0^4s_y^{-1}\notin D$. Since $s_0^3s_x,s_0^3s_xs_y^{-1}\in N(s_0^3s_xs_y^{-1}t)$, one obtains $s_0^3s_xs_y^{-1}\notin D$. By \eqref{v2}, one has $D\cap (N(s_0^3s_y^{-1}t)\cup \{s_0^3s_y^{-1}t\})=D\cap \{s_0^3s_y^{-1}t,s_0^3s_y^{-1},s_0^4s_y^{-1},s_0^3s_xs_y^{-1},s_0^3\}=\emptyset$, a contradiction. Thus, $s_0^3s_x^{-1}t,s_0^4s_y^{-1}t,s_0^3s_y,s_0^4s_x\in D$ for $\{x,y\}=\{1,2\}$.

Since $s_0^4s_x,s_0^3s_x\in N(s_0^3s_xt)$, we get $s_0^3s_x,s_0^3s_xt\notin D$. Since $s_0^4s_y^{-1}t,s_0^3s_xs_y^{-1}t\in N(s_0^4s_xs_y^{-1})$, we have $s_0^3s_xs_y^{-1}t\notin D$. By \eqref{v2}, we obtain $s_0^3t\notin D$. The fact $N(s_0^3s_x)=\{s_0^3s_xt,s_0^2s_xt,s_0^3t,s_0^3s_xs_y^{-1}t\}$ implies $s_0^2s_xt\in D$. Since $s_0^2s_xt\in L_{y-1,x-1}\cap At\cap D$ and $s_0^4s_x\in D$, from Lemma \ref{uv}, one gets
\begin{align}
D\cap L_{y-1,x-1}=\cup_{\alpha'=0}^{n/5-1}\{s_0^{5\alpha'}\cdot s_0^{2}s_xt,s_0^{2}s_xt\cdot ts_0^{2+5\alpha'}\}.\label{x-2}
\end{align}

Since $s_0^3s_y,s_0^2s_y\in N(s_0^2s_yt)$, we obtain $s_0^2s_y,s_0^2s_yt\notin D$. Since $s_0^3s_x^{-1}t,s_0^2s_x^{-1}s_yt\in N(s_0^3s_x^{-1}s_y)$, one gets $s_0^2s_x^{-1}s_yt\notin D$. By \eqref{v2}, we have $s_0^2t\notin D$. The fact $N(s_0^2s_y)=\{s_0^2s_yt,s_0s_yt,s_0^2s_x^{-1}s_yt,s_0^2t\}$ implies $s_0s_yt\in D$. Since $s_0s_yt\in L_{x-1,y-1}\cap At\cap D$ and $s_0^3s_y\in D$, from Lemma \ref{uv}, one has
\begin{align}
D\cap L_{x-1,y-1}=\cup_{\alpha'=0}^{n/5-1}\{s_0^{5\alpha'}\cdot s_0s_yt,s_0s_yt\cdot ts_0^{2+5\alpha'}\}.\label{y-2}
\end{align}

Since $v=2$, we obtain $\{a,b\}=\{2,1\}$. Since $\{x,y\}=\{1,2\}$, from \eqref{x-2} and \eqref{y-2}, the desired result is valid.

\textbf{Case 2.} $v=3$.

Note that
\begin{align}\label{v3}
	D\cap L_{0,0}=\cup_{\alpha'=0}^{n/{5}-1}\{s_0^{5\alpha'}t,s_0^{3+5\alpha'}\}.
\end{align}
Since $N(s_0t)=\{s_0,s_0^2,s_0s_1,s_0s_2\}$ and $N(s_0^2)=\{s_0^2t,s_0t,s_0^2s_1^{-1}t,s_0^2s_2^{-1}t\}$, by \eqref{v3}, one has $s_0s_1$ or $s_0s_2\in D$, and $s_0^2s_1^{-1}t$ or $s_0^2s_2^{-1}t\in D$. Suppose $s_0s_x,s_0^2s_y^{-1}t\in D$ for $\{x,y\}= \{1,2\}$. Since $s_0s_x,s_0^2s_x\in N(s_0s_xt)$ and $s_0^2s_y^{-1}t,s_0^2s_xs_y^{-1}t\in N(s_0^2s_xs_y^{-1})$, one gets $s_0^2s_x,s_0s_xt,s_0^2s_xs_y^{-1}t\notin D$. Since $N(s_0^2s_x)=\{s_0^2s_xt,s_0s_xt,s_0^2t,s_0^2s_xs_y^{-1}t\}$, from \eqref{v3}, we obtain $s_0^2s_xt\in D$. Since $s_0^2s_xt,s_0^3s_xs_y^{-1}t\in N(s_0^3s_x)$, we have $s_0^3s_xs_y^{-1}t\notin D$. Since $t,s_xs_y^{-1}t\in N(s_x)$ and $s_0^3,s_0^3s_xs_y^{-1}\in N(s_0^3s_y^{-1}t)$, by \eqref{v3}, we get $s_xs_y^{-1}t,s_0^3s_xs_y^{-1}\notin D$. Since $s_0s_x,s_0s_xs_y^{-1},s_0^2s_xs_y^{-1}\in N(s_0s_xs_y^{-1}t)$, one has $s_0s_xs_y^{-1},s_0s_xs_y^{-1}t,s_0^2s_xs_y^{-1}\notin D$, which implies $$D\cap\{s_xs_y^{-1}t,s_0s_xs_y^{-1},s_0s_xs_y^{-1}t,s_0^2s_xs_y^{-1},s_0^2s_xs_y^{-1}t,s_0^3s_xs_y^{-1},s_0^3s_xs_y^{-1}t\}=\emptyset,$$ contrary to Lemma \ref{ijk} \ref{ijk-1}. Then $s_0s_x,s_0^2s_x^{-1}t\in D$ for $x\in \{1,2\}$.

Since $N(s_0^4)=\{s_0^4t,s_0^3t,s_0^4s_1^{-1}t,s_0^4s_2^{-1}t\}$ and $N(s_0^4t)=\{s_0^4,s_0^5,s_0^4s_1,s_0^4s_2\}$, from \eqref{v3}, we have $s_0^4s_1^{-1}t$ or $s_0^4s_2^{-1}t\in D$, and $s_0^4s_1$ or $s_0^4s_2\in D$. Suppose $s_0^4s_x^{-1}t,s_0^4s_y\in D$ for $\{x,y\}= \{1,2\}$. Since $s_0^4s_x^{-1}t,s_0^5s_x^{-1}t\in N(s_0^5s_x^{-1})$ and $s_0^4s_y,s_0^5s_x^{-1}s_y\in N(s_0^4s_x^{-1}s_yt)$, one gets $s_0^5s_x^{-1}t,s_0^5s_x^{-1},s_0^5s_x^{-1}s_y\notin D$. Since $N(s_0^5s_x^{-1}t)=\{s_0^5s_x^{-1},s_0^6s_x^{-1},s_0^5,s_0^5s_x^{-1}s_y\}$, from \eqref{v3}, one has $s_0^6s_x^{-1}\in D$. Since $s_0^6s_x^{-1},s_0^6s_x^{-1}s_y\in N(s_0^6s_x^{-1}t)$, we have $s_0^6s_x^{-1}s_y\notin D$. Since $s_0^3,s_0^3s_x^{-1}s_y\in N(s_0^3s_x^{-1}t)$ and $s_0^5t,s_0^5s_x^{-1}s_yt\in N(s_0^5s_y)$, by \eqref{v3}, one obtains $s_0^3s_x^{-1}s_y,s_0^5s_x^{-1}s_yt\notin D$. Since $s_0^4s_x^{-1}t,s_0^3s_x^{-1}s_yt,s_0^4s_x^{-1}s_yt\in N(s_0^4s_x^{-1}s_y)$, we obtain $s_0^3s_x^{-1}s_yt,s_0^4s_x^{-1}s_y,s_0^4s_x^{-1}s_yt\notin D$, which implies $$D\cap\{s_0^3s_x^{-1}s_y,s_0^3s_x^{-1}s_yt,s_0^4s_x^{-1}s_y,s_0^4s_x^{-1}s_yt,s_0^5s_x^{-1}s_y,s_0^5s_x^{-1}s_yt,s_0^6s_x^{-1}s_y\}$$ contrary to Lemma \ref{ijk} \ref{ijk-2}. Then $s_0^4s_x^{-1}t,s_0^4s_x\in D$ for $x\in \{1,2\}$.

Suppose $s_0s_x,s_0^2s_x^{-1}t,s_0^4s_x^{-1}t,s_0^4s_x\in D$ for $x\in\{1,2\}$. Since $s_0^3,s_0^3s_x^{-1}s_y\in N(s_0^3s_x^{-1}t)$, by \eqref{v3}, we get $s_0^3s_x^{-1}s_y\notin D$. The fact $s_0^2s_x^{-1}t,s_0s_x^{-1}s_yt,s_0^2s_x^{-1}s_yt\in N(s_0^2s_x^{-1}s_y)$ implies $s_0s_x^{-1}s_yt,s_0^2s_x^{-1}s_y,s_0^2s_x^{-1}s_yt\notin D$. Since $s_0^4s_x^{-1}t,s_0^3s_x^{-1}s_yt,s_0^4s_x^{-1}s_yt\in N(s_0^4s_x^{-1}s_y)$, we have $s_0^3s_x^{-1}s_yt,s_0^4s_x^{-1}s_y,s_0^4s_x^{-1}s_yt\notin D$, which implies $$D\cap\{s_0s_x^{-1}s_yt,s_0^2s_x^{-1}s_y,s_0^2s_x^{-1}s_yt,s_0^3s_x^{-1}s_y,s_0^3s_x^{-1}s_yt,s_0^4s_x^{-1}s_y,s_0^4s_x^{-1}s_yt\}=\emptyset,$$ contrary to Lemma \ref{ijk} \ref{ijk-1}. Thus, $s_0s_x,s_0^2s_x^{-1}t,s_0^4s_y^{-1}t,s_0^4s_y\in D$ for $\{x,y\}=\{1,2\}$.

Since $s_0^4s_y^{-1}t,s_0^4s_xs_y^{-1}t\in N(s_0^4s_xs_y^{-1})$, we have $s_0^4s_xs_y^{-1}t\notin D$. Since $s_0^4s_y,s_0^4s_x\in N(s_0^4t)$, we get $s_0^4s_x,s_0^4t\notin D$. Since $s_0^5t,s_0^4s_xt\in N(s_0^5s_x)$, by \eqref{v3}, one has $s_0^4s_xt\notin D$. The fact $N(s_0^4s_x)=\{s_0^4s_xt,s_0^3s_xt,s_0^4t,s_0^4s_xs_y^{-1}t\}$ implies $s_0^3s_xt\in D$. Since $s_0^3s_xt\in L_{y-1,x-1}\cap At\cap D$ and $s_0s_x\in D$, from Lemma \ref{uv}, one obtains
\begin{align}
D\cap L_{y-1,x-1}=\cup_{\alpha'=0}^{n/5-1}\{s_0^{5\alpha'}\cdot s_0^{3}s_xt,s_0^{3}s_xt\cdot ts_0^{3+5\alpha'}\}.\label{x-3}
\end{align}

Since $s_0s_x,s_0s_y\in N(s_0t)$ and $s_0^2s_x^{-1}t,s_0s_x^{-1}s_yt\in N(s_0^2s_x^{-1}s_y)$, we get $s_0s_y,s_0s_x^{-1}s_yt\notin D$. Since $t,s_yt\in N(s_y)$, from \eqref{v3}, we have $s_yt\notin D$. By \eqref{v3}, one has $s_0t\notin D$. The fact $N(s_0s_y)=\{s_0s_yt,s_yt,s_0s_x^{-1}s_yt,s_0t\}$ implies $s_0s_yt\in D$. Since $s_0s_yt\in L_{x-1,y-1}\cap At\cap D$ and $s_0^4s_y\in D$, by Lemma \ref{uv}, one gets
\begin{align}
D\cap L_{x-1,y-1}=\cup_{\alpha'=0}^{n/5-1}\{s_0^{5\alpha'}\cdot s_0s_yt,s_0s_yt\cdot ts_0^{3+5\alpha'}\}.\label{y-3}
\end{align}

Since $v=3$, we obtain $\{a,b\}=\{3,1\}$. Since $\{x,y\}=\{1,2\}$, from \eqref{x-3} and \eqref{y-3}, the desired result is valid.

\textbf{Case 3.} $v=4$.

Note that
\begin{align}\label{v4}
	D\cap L_{0,0}=\cup_{\alpha'=0}^{n/{5}-1}\{s_0^{5\alpha'}t,s_0^{4+5\alpha'}\}.
\end{align} Since $N(s_0t)=\{s_0,s_0^2,s_0s_1,s_0s_2\}$ and $N(s_0^2t)=\{s_0^2,s_0^3,s_0^2s_1,s_0^2s_2\}$, one gets $s_0s_1$ or $s_0s_2\in D$, and $s_0^2s_1$ or $s_0^2s_2\in D$. Since $s_0s_1,s_0^2s_1\in N(s_0s_1t)$ and $s_0s_2,s_0^2s_2\in N(s_0s_2t)$, we obtain $s_0s_x,s_0^2s_y\in D$ for $\{x,y\}=\{1,2\}$.

Since $N(s_0^2)=\{s_0^2t,s_0t,s_0^2s_1^{-1}t,s_0^2s_2^{-1}t\}$ and $N(s_0^3)=\{s_0^3t,s_0^2t,s_0^3s_1^{-1}t,s_0^3s_2^{-1}t\}$, from \eqref{v4}, we get $s_0^2s_1^{-1}t$ or $s_0^2s_2^{-1}t\in D$, and $s_0^3s_1^{-1}t$ or $s_0^3s_2^{-1}t\in D$. Since $s_0^2s_1^{-1}t,s_0^3s_1^{-1}t\in N(s_0^3s_1^{-1})$ and $s_0^2s_2^{-1}t,s_0^3s_2^{-1}t\in N(s_0^3s_2^{-1})$, one obtains $s_0^2s_x^{-1}t,s_0^3s_y^{-1}t\in D$ for $\{x,y\}=\{1,2\}$.

Suppose $s_0s_x,s_0^2s_y,s_0^2s_x^{-1}t,s_0^3s_y^{-1}t\in D$ for $\{x,y\}=\{1,2\}$. Since $s_0^2s_x^{-1}t,s_0^3s_x^{-1}t\in N(s_0^3s_x^{-1})$, we have $s_0^3s_x^{-1}t,s_0^3s_x^{-1}\notin D$. Since $s_0^2s_y,s_0^3s_x^{-1}s_y\in N(s_0^2s_x^{-1}s_yt)$, we obtain $s_0^3s_x^{-1}s_y\notin D$. Since $s_0^4,s_0^4s_x^{-1}\in N(s_0^4s_x^{-1}t)$, by \eqref{v4}, one gets $s_0^4s_x^{-1}\notin D$, which implies that $D\cap (N(s_0^3s_x^{-1}t)\cup \{s_0^3s_x^{-1}t\})=D\cap \{s_0^3s_x^{-1}t,s_0^3s_x^{-1},s_0^4s_x^{-1},s_0^3s_x^{-1}s_y,s_0^3\}=\emptyset$, a contradiction. Thus, $s_0s_x,s_0^2s_y,s_0^2s_y^{-1}t,s_0^3s_x^{-1}t\in D$ for $\{x,y\}=\{1,2\}$.

Since $s_0s_x,s_0^2s_x\in N(s_0s_xt)$, we have $s_0^2s_x,s_0s_xt\notin D$. The fact $s_0^2s_y^{-1}t,s_0^2s_xs_y^{-1}t\in N(s_0^2s_xs_y^{-1})$ implies $s_0^2s_xs_y^{-1}t\notin D$.  Since $N(s_0^2s_x)=\{s_0^2s_xt,s_0s_xt,s_0^2t,s_0^2s_xs_y^{-1}t\}$, by \eqref{v4}, we get $s_0^2s_xt\in D$. Since $s_0^2s_xt\in L_{y-1,x-1}\cap At\cap D$ and $s_0s_x\in D$, by Lemma \ref{uv}, one gets
\begin{align}
D\cap L_{y-1,x-1}=\cup_{\alpha'=0}^{n/5-1}\{s_0^{5\alpha'}\cdot s_0^{2}s_xt,s_0^{2}s_xt\cdot ts_0^{4+5\alpha'}\}.\label{x-4}
\end{align}

Since $s_0^2s_y,s_0^3s_y\in N(s_0^2s_yt)$, we obtain $s_0^3s_y,s_0^2s_yt\notin D$. Since $s_0^3s_x^{-1}t,s_0^3s_x^{-1}s_yt\in N(s_0^3s_x^{-1}s_y)$, one has $s_0^3s_x^{-1}s_yt\notin D$. By \eqref{v4}, one gets $s_0^3t\notin D$. The fact $N(s_0^3s_y)=\{s_0^3s_yt,s_0^2s_yt,s_0^3s_x^{-1}s_yt,s_0^3t\}$ implies $s_0^3s_yt\in D$. Since $s_0^3s_yt\in L_{x-1,y-1}\cap At\cap D$ and $s_0^2s_y\in D$, by Lemma \ref{uv}, we get
\begin{align}
D\cap L_{x-1,y-1}=\cup_{\alpha'=0}^{n/5-1}\{s_0^{5\alpha'}\cdot s_0^{3}s_yt,s_0^{3}s_yt\cdot ts_0^{4+5\alpha'}\}.\label{y-4}
\end{align}

Since $v=4$, we obtain $\{a,b\}=\{2,3\}$. Since $\{x,y\}=\{1,2\}$, from \eqref{x-4} and \eqref{y-4}, the desired result is valid.
\end{proof}
%\begin{lemma}\label{jk}

\begin{lemma}\label{234}
Let $v\in\{2,3,4\}$ and $\{a,b\}=\{2+\sigma(v),v-\sigma(v)-1\}$. Suppose $t\in D$. Then $5\mid n$, $s_1^{m}=s_0^{5\alpha_1-am}$, $s_2^{l}=s_0^{5\alpha_2-bl+aj}s_1^{j}$ and $$D=\bigcup_{\alpha'=0}^{n/5-1}\bigcup_{j'=0}^{m-1}\bigcup_{k'=0}^{l-1}\left(\{s_0^{5\alpha'+aj'+bk'}s_1^{j'}s_2^{k'}t\}\cup\{s_0^{5\alpha'+aj'+bk'+v}s_1^{j'}s_2^{k'}\}\right)$$ for some $\alpha_1,\alpha_2\in \{0,1,\ldots,n/{5}-1\}$ and $j\in \{0,1,\ldots,m-1\}$.
\end{lemma}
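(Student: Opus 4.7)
The plan is to bootstrap from Lemmas \ref{uv} and \ref{jk}, working layer by layer starting from $L_{0,0}$, and then extract the algebraic relations on $s_1^m$ and $s_2^l$ from the consistency conditions that arise when layers wrap around.

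First, I would apply Lemma \ref{uv} to the vertex $t\in L_{0,0}\cap At\cap D$. This immediately yields $5\mid n$ and
\[
D\cap L_{0,0}=\bigcup_{\alpha'=0}^{n/5-1}\{s_0^{5\alpha'}t,\ s_0^{v+5\alpha'}\}
\]
for some $v\in\{2,3,4\}$. Fix such a $v$, set $\{a,b\}=\{2+\sigma(v),v-\sigma(v)-1\}$, and fix one of the two assignments; the other is absorbed by the notation $\{a,b\}$ (it corresponds to swapping the roles of $s_1$ and $s_2$, which is the reason Lemma \ref{jk} is symmetric in the two generators).

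Second, I would prove by induction on $j+k$, for $0\le j\le m-1$ and $0\le k\le l-1$, that
\[
D\cap L_{j,k}=\bigcup_{\alpha'=0}^{n/5-1}\bigl\{s_0^{5\alpha'+aj+bk}s_1^{j}s_2^{k}t,\ s_0^{5\alpha'+aj+bk+v}s_1^{j}s_2^{k}\bigr\}.
\]
Each inductive step is a direct application of Lemma \ref{jk} to the vertex $x=s_0^{aj+bk}s_1^{j}s_2^{k}t\in D\cap L_{j,k}\cap At$, shifting to $L_{j+1,k}$ or $L_{j,k+1}$. By the minimality of $m$ and $l$, the collection $\{L_{j,k}:0\le j\le m-1,\ 0\le k\le l-1\}$ partitions $V\Gamma$, so taking the union over all $(j,k)$ reproduces exactly the set displayed in the statement.

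Third, I would extract the relations $s_1^{m}=s_0^{5\alpha_1-am}$ and $s_2^{l}=s_0^{5\alpha_2-bl+aj}s_1^{j}$ from the wrap-around. Since $s_1^{m}\in\langle s_0\rangle$, write $s_1^{m}=s_0^{h}$ for some $0\le h\le n-1$. Then $L_{m,0}=L_{0,0}$ as subsets of $V\Gamma$. Applying Lemma \ref{jk} one extra step past $L_{m-1,0}$ computes $D\cap L_{m,0}$ as $\bigcup_{\alpha'}\{s_0^{5\alpha'+am+h}t,\ s_0^{5\alpha'+am+h+v}\}$, and comparison with the formula for $D\cap L_{0,0}$ forces $am+h\equiv0\pmod 5$, giving $h=5\alpha_1-am$ with $\alpha_1\in\{0,\ldots,n/5-1\}$. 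An entirely parallel argument handles $s_2$: write $s_2^{l}=s_0^{c}s_1^{j}$ with $0\le j\le m-1$ (valid because $s_2^{l}\in\langle s_0,s_1\rangle$ and by minimality of $m$), observe $L_{0,l}=L_{j,0}$, and compare the two expressions for the perfect code intersected with this common layer to conclude $c=5\alpha_2-bl+aj$ with $\alpha_2\in\{0,\ldots,n/5-1\}$.

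The main delicate point is the wrap-around step: one must verify that the two expressions for $D\cap L_{m,0}$, respectively $D\cap L_{0,l}=D\cap L_{j,0}$, coincide \emph{as subsets of $V\Gamma$}, which requires interpreting the exponents simultaneously modulo $n$ and modulo $5$ (this is where $5\mid n$ is used crucially). Once this consistency is confirmed, the displayed formula for $D$ follows directly from the layer-by-layer description, completing the proof.
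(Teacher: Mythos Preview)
Your proposal is correct and follows essentially the same route as the paper: apply Lemma~\ref{uv} at $t$ to get $5\mid n$ and the base layer, propagate via Lemma~\ref{jk} by induction to obtain the formula for every $D\cap L_{j,k}$, and then read off the congruence conditions on $s_1^m$ and $s_2^l$ from the wrap-around identifications $L_{m,0}=L_{0,0}$ and $L_{0,l}=L_{j,0}$. The only cosmetic difference is that the paper states the inductive formula for all $j',k'\ge 0$ at once and then specializes, whereas you restrict to $0\le j\le m-1$, $0\le k\le l-1$ and take one extra step for the wrap; the content is identical.
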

\begin{proof}
By Lemma \ref{uv}, we have $5\mid n$. Note that $D\cap L_{0,0}=\cup_{\alpha'=0}^{n/{5}-1}\{s_0^{5\alpha'}t,s_0^{v+5\alpha'}\}$ for some $v\in\{2,3,4\}$. By induction and Lemma \ref{jk}, we have 
\begin{align}
	D\cap L_{j',k'}=\cup_{\alpha'=0}^{n/5-1}\{s_0^{5\alpha'+aj'+bk'}s_1^{j'}s_2^{k'}t,s_0^{5\alpha'+aj'+bk'+v}s_1^{j'}s_2^{k'}\}\label{j'k'-1}
\end{align}
for $j',k'\geq0$.

Since $m$ is the minimum positive integer such that $s_1^m\in \langle s_0\rangle$ and $l$ is the minimum positive integer such that $s_2^l\in \langle s_0,s_1\rangle$, we get $D\cap L_{m,0}=D\cap L_{0,0}$ and $D\cap L_{0,l}=D\cap L_{j,0}$ for some $j\in\{0,1,\ldots,m-1\}$. By \eqref{j'k'-1}, there exist $\beta_1,\beta_2,\beta_3,\beta_4\in \{0,1,\ldots,n/5-1\}$ such that $s_0^{5\beta_1+am+v}s_1^{m}=s_0^{5\beta_2+v}$ and $s_0^{5\beta_3+bl+v}s_2^{l}=s_0^{5\beta_4+aj+v}s_1^{j}$. Then $s_1^m=s_0^{5(\beta_2-\beta_1)-am}$ and $s_2^l=s_0^{5(\beta_4-\beta_3)-bl+aj}s_1^j$. By \eqref{j'k'-1} again, the desired result follows.
\end{proof}
%\begin{lemma}\label{234}

\section{Proofs of Theorems \ref{1.1} and \ref{1.2}}

Now we are ready to give a proof of Theorem \ref{1.1}.

\begin{proof}[Proof of Theorem \ref{1.1}] By Proposition \ref{2.5} \ref{2.5-1} and Proposition \ref{v-2/3/4}, the sufficiency is valid.
	
Now we prove the necessity. By Lemmas \ref{2.1} and \ref{2.2}, we get $|S\cap At|=2$ or $4$.

Suppose $|S\cap At|=2$. By Lemma \ref{2.3}, we may assume $S=\{s_1,s_1^{-1}, t, ts_0\}$ with $s_0,s_1\in A$. Let $o(s_0)=n$ and $m$ be the minimum positive integer such that $s_1^m\in\langle s_0\rangle$. Since $s_1^m=s_0^h$ for some $h\in\{0,1,\ldots,n-1\}$, from Proposition \ref{2.5} \ref{2.5-1}, we get $5\mid n$ and $h\equiv5u/2\pm m~({\rm mod}~n)$ for some $u\in\{0,2,4,\ldots,2(n/5-1)\}$. \ref{1.1-1} is valid.

Suppose $|S\cap At|=4$. Let $S=\{t,ts_0,ts_1,ts_2\}$ and $o(s_0)=n$ with $s_0,s_1,s_2\in A$. Since $\Gamma$ is vertex transitive, we may assume $t\in D$. Let $m$ be the minimum positive integer such that $s_1^m\in\langle s_0\rangle$ and $l$ be the minimum positive integer such that $s_2^l\in \langle s_0,s_1\rangle$. By Lemma \ref{234}, \ref{1.1-2} is valid.
\end{proof}

Next, we give a proof of the Theorem \ref{1.2}.

\begin{proof}[Proof of Theorem \ref{1.2}] By Proposition \ref{2.5} \ref{2.5-2}, Proposition \ref{v-2/3/4} and Lemma \ref{234}, the desired result is valid.
\end{proof}
 	
\section*{Acknowledgements}
Yuefeng Yang is supported by NSFC (12101575, 52377162),  Changchang Dong is supported by the Fundamental Research Funds for the Central Universities.

\section*{Data Availability Statement}
	
No data was used for the research described in the article.

\end{document}